\documentclass[11pt]{amsart}
\pdfoutput=1
\usepackage{amssymb}
\usepackage{amsmath}
\usepackage{amsfonts}
\usepackage[usenames]{color}
\usepackage{graphicx}
\usepackage{array}
\usepackage{psfrag}
\usepackage{color}
\usepackage{ulem}
\usepackage{fancyhdr}

\usepackage{caption}

\usepackage{import}
\usepackage{xifthen}
\usepackage{pdfpages}
\usepackage{transparent}
\newcommand{%
    \import{./figures/}{.pdf_tex}
}[1]{%
    \import{./figures/}{#1.pdf_tex}
}

\makeatletter
 
 \@addtoreset{equation}{section}
\makeatother

\textwidth=6.5in
\hoffset=-2cm
\textheight=8.2in

\newtheorem{theorem}{Theorem}
\newtheorem{lemma}[theorem]{Lemma}
\newtheorem{proposition}[theorem]{Proposition}

\theoremstyle{definition}

\newtheorem{observation}[theorem]{Observation}

\newtheorem{corollary}[theorem]{Corollary}

\theoremstyle{remark}
\newtheorem{remark}[theorem]{Remark}

\numberwithin{equation}{section}

\newcommand{\R}{\mathbb{R}}
\newcommand{\N}{\mathbb{N}}

\newcommand{\dfn}[1]{\textit{#1}}

\begin{document}

\title{{Intrinsically knotted graphs and connected domination}}

\date{\today}
\author{
 Gregory Li, Andrei Pavelescu, and Elena Pavelescu
}

\address{\textit{gregoryli@college.harvard.edu}, Harvard University,
Cambridge, MA 02138}
\address{\textit{andreipavelescu@southalabama.edu}, Department of Mathematics and Statistics, University of South Alabama, Mobile, AL  36688}
\address{\textit{elenapavelescu@southalabama.edu}, Department of Mathematics and Statistics, University of South Alabama, Mobile, AL  36688}

\maketitle
\rhead{Intrinsically knotted graphs and connected domination}

\begin{abstract}

We classify all the maximal linklessly embeddable graphs of order 12 and show that their complements are all intrinsically knotted. We derive results about the connected domination numbers of a graph and its complement. We provide an answer to an open question about the minimal order of a 3-non-compliant graph. We prove that the complements of knotlessly embeddable graphs of order at least 15 are all intrinsically knotted. We provide results on general $k$-non-compliant graphs and leave a set of open questions for further exploration of the subject.
\end{abstract}


\section{Introduction}

Sixty years ago, Battle, Harary, and Kodama \cite{BHK} proved that the complement of a planar graph of order nine is not planar. This result was independently proved by Tutte \cite{Tutte}. We express this fact by saying that the complete graph $K_9$ is not \textit{bi-planar}. In general, for a graph property $\mathcal{P}$, we say that $K_n$ is \textit{bi-$\mathcal{P}$}, if there exists a graph $G$ of order $n$ such that both $G$ and its complement, $\overline{G}$, have the property $\mathcal{P}$. 

Since then, many similar results were derived, some of them generalizing the original one. Ichihara and Mattman \cite{IM} proved that for all nonnegative integers $t$, the complete graph $K_{2t+9}$ is not bi-t-apex. A graph is called \textit{t-apex} if there exist a choice of $t$ of its vertices which deleted yield a planar graph. In private communications, the last two authors argued that this result is sharp, as they showed that $K_{2t+8}$ is bi-t-apex for all $t\ge 0$.  Odeneal, Naimi, and the last two authors \cite{NOPP} proved that $K_n$ is not bi-linklessly embeddable (bi-nIL), for all $n\ge 11$. A graph is \dfn{intrinsically linked} (\dfn{IL}) if every embedding of it in $\R^3$ (or $S^3$) contains a nonsplittable 2-component link.
A graph is \dfn{linklessly embeddable} if it is not intrinsically linked (\dfn{nIL}). The $n\ge 11$ bound is also sharp. 

In this article, we investigate  bi-knotless embeddability. A graph is called \dfn{intrinsically knotted} if every embedding of it in $S^3$ contains a nontrivial knot.
We abbreviate intrinsically knotted as IK, and not intrinsically knotted as nIK. We are trying to provide an answer to the following question: what is the smallest integer $n$, such that $K_n$ is not bi-nIK? We employ some of the techniques used by the last two authors in \cite{PP}, where they proved $K_{13}$ is not bi-nIL. In the process, we find all the 6503 maximal nIL graphs of order 12. We focus on finding a large degree vertex in a minor of either the graph or its complement. This, in turn, leads to the study of the connected domination numbers for both $G$ and $\overline{G}$. We then prove:\\

{\bf Theorem} The complete graph $K_n$ is not bi-nIK, for all $n\ge 15$.\\

 All the graphs considered in this article are simple: non-oriented, without loops or multiple edges. For a given graph $G$, we let $V(G)$ denote its vertex set, and $E(G)$ denote its edge set. Given a graph $G$, the \textit{complement} graph $\overline{G}$ has the same vertex set as $G$ and $E(\overline{G}) = \{ (u, v) \:|\: u, v \in V(G) \: \text{and} \: (u, v) \not \in E(G) \}.$
The maximum degree among all vertices of $G$ is $\Delta (G)$, and the minimum degree is $\delta (G)$.
Let $\delta^*(G):= \min\{\delta(G), \delta(\overline{G})\}$. 
For a vertex $u \in V(G)$, the set $N_G(u)$ is the set of all vertices of $G$ that are adjacent to $u$, and $N_G[u]= N_G(u)\sqcup \{u\}$. For a given graph $G$, we denote the number of edges (\textit{size}) of $G$ by $|G|$.

A set $S \subseteq V(G)$ is a \textit{dominating set} of $G$  if every vertex in $V \backslash S$ is adjacent to at least one vertex in $S$.
The \textit{domination number} $\gamma(G)$ of $G$ is the minimum cardinality of dominating sets $S$ of $G$.
The \textit{connected domination number} $\gamma_c(G)$ of $G$ is the minimum cardinality of dominating sets $S$ of $G$ which induce a \textit{connected} subgraph $G[S]$ of $G$.

Some of the necessary results on connected domination were readily available by the work of Karami, Sheikholeslami, Khodkar, and West \cite{KSKW}. They proved a Nordhaus-Gaddum type relation for the connected domination number.  

\begin{theorem}[Karami et al. \cite{KSKW}]
\label{th-karami}
If $G$ and $\overline{G}$ are both connected graphs on $n$ vertices with $\gamma_c(G), \gamma_c(\overline{G}) \geq 4$, then $$\gamma_c(G) + \gamma_c(\overline{G}) \leq \delta^*(G) + 2.$$
\end{theorem}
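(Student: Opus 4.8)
The plan is to reduce the statement to two ingredients: a maximum-degree bound on $\gamma_c$, and a joint tree construction that exploits the complementary adjacencies across a minimum-degree vertex. First I would establish the tool that for any connected graph $H$ on $n\ge 3$ vertices, $\gamma_c(H)\le n-\Delta(H)$. Fix a vertex $w$ with $\deg_H(w)=\Delta(H)$ and build a spanning tree $T$ of $H$ containing all $\Delta(H)$ edges incident to $w$ (start from the star centered at $w$ and extend to a spanning tree). Deleting $w$ from $T$ splits it into exactly $\Delta(H)$ subtrees; a one-vertex subtree is itself a leaf of $T$, and a subtree on at least two vertices has at least two leaves, of which at least one is not adjacent to $w$ and is therefore a leaf of $T$. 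Hence $T$ has at least $\Delta(H)$ leaves. Since the non-leaf vertices of any spanning tree induce a connected subgraph that dominates $H$, this yields $\gamma_c(H)\le n-\Delta(H)$.

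Applying this to $\overline{G}$ and using $\Delta(\overline{G})=n-1-\delta(G)$ gives the cheap bound $\gamma_c(\overline{G})\le \delta(G)+1$, and symmetrically $\gamma_c(G)\le \delta(\overline{G})+1$. Simply adding these yields $\gamma_c(G)+\gamma_c(\overline{G})\le \delta(G)+\delta(\overline{G})+2$, which is too weak: the theorem demands $\delta^*(G)+2=\min\{\delta(G),\delta(\overline{G})\}+2$. So the real content is to replace the sum $\delta(G)+\delta(\overline{G})$ by its minimum, which forces a genuinely joint argument rather than two independent bounds.

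For that, assume without loss of generality $\delta^*(G)=\delta(G)=:\delta$, witnessed by a vertex $v$ with $\deg_G(v)=\delta$. Partition $V(G)\setminus\{v\}$ into $A=N_G(v)$ and $B=V(G)\setminus N_G[v]=N_{\overline{G}}(v)$, so $|A|=\delta$ and $|B|=n-1-\delta$, and use $v$ as a hub in both graphs. In $\overline{G}$, the vertex $v$ is adjacent to all of $B$, so the star from $v$ to $B$ already dominates $\{v\}\cup B$; only the $\delta$ vertices of $A$ remain to be attached, each through a $G$-non-edge to $B$ (or to another vertex of $A$). Dually, in $G$ the star from $v$ to $A$ dominates $\{v\}\cup A$, and only $B$ remains to be attached through $G$-edges. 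Since the bipartite $A$–$B$ adjacencies in $G$ and in $\overline{G}$ are exactly complementary, I would choose the attachment points for the two trees simultaneously from a single assignment, so that the internal vertices used to absorb $A$ in the $\overline{G}$-tree and those used to absorb $B$ in the $G$-tree together number at most $|A|=\delta$; adding the two hubs $v$ then gives $\gamma_c(G)+\gamma_c(\overline{G})\le\delta+2$.

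The hard part will be making this joint count rigorous while preserving connectivity and handling the degenerate vertices: a vertex $a\in A$ adjacent in $G$ to all of $B$ cannot be attached to $B$ in $\overline{G}$, and a vertex $b\in B$ nonadjacent in $G$ to all of $A$ cannot be attached to $A$ in $G$. Controlling these exceptional vertices, and ensuring that the extension sets on the two sides do not overcount, is where a König/matching-type covering argument on the bipartite $A$–$B$ adjacency enters, and where the hypotheses $\gamma_c(G),\gamma_c(\overline{G})\ge 4$ are essential: they exclude the cheap configurations (a single absorbing vertex giving $\gamma_c\le 2$) and thereby guarantee that the complementary covers of $A$ and $B$ can be balanced to keep the combined internal count at $\delta$ rather than $\delta+\delta(\overline{G})$.
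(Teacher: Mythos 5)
First, a point of reference: the paper does not prove this statement at all --- it is imported verbatim from Karami, Sheikholeslami, Khodkar, and West \cite{KSKW} as a known theorem, so the comparison must be against their published proof, which is a genuinely nontrivial multi-step argument. Your opening tool is correct and cleanly argued: a spanning tree of a connected graph $H$ containing the full star at a maximum-degree vertex has at least $\Delta(H)$ leaves, its internal vertices form a connected dominating set, and hence $\gamma_c(H)\le n-\Delta(H)$, giving $\gamma_c(\overline{G})\le\delta(G)+1$ and $\gamma_c(G)\le\delta(\overline{G})+1$. You also correctly diagnose that these bounds, used independently, only yield $\gamma_c(G)+\gamma_c(\overline{G})\le\delta(G)+\delta(\overline{G})+2$, and that the entire content of the theorem is the replacement of that sum by $\min\{\delta(G),\delta(\overline{G})\}$.

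The step that would accomplish this replacement, however, is not actually proved. Everything rests on the claim that the vertices absorbing $A=N_G(v)$ into the $\overline{G}$-tree and those absorbing $B=N_{\overline{G}}(v)$ into the $G$-tree can be chosen ``simultaneously from a single assignment'' totalling at most $|A|=\delta$; you then state that making this rigorous is ``the hard part'' and point to an unspecified K\"onig/matching argument. That is precisely where the theorem lives: a vertex of $B$ with no $G$-neighbor in $A$ forces the $G$-side tree to grow inside $B$, a vertex of $A$ adjacent in $G$ to all of $B$ forces the $\overline{G}$-side tree to grow inside $A$, and connectivity must be maintained in both graphs at once --- nothing in the sketch shows these costs can be charged to a common budget of size $\delta$ rather than accumulating on both sides. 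Likewise, the hypothesis $\gamma_c(G),\gamma_c(\overline{G})\ge 4$ is only asserted to be ``essential''; it is never used in any step you actually carry out, even though without it the conclusion is false (a path has $\gamma_c(\overline{G})=2$ but $\gamma_c(G)=n-2$). As written, the proposal establishes only the weak additive bound together with a correct framing of the difficulty; the theorem itself remains unproven, and completing it would essentially require reproducing the chain of lemmas in \cite{KSKW}.
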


They showed that equality can only hold when $ \delta^*(G)=6$, and they provided an example of a simple graph $G$ with $\gamma_c(G) =\gamma_c(\overline{G})=4$, and $ \delta^*(G)=6$. However, this example has nearly 15,000 vertices. We prove that there is a minimal example of order 13.

A \textit{minor} of a graph $G$ is any graph that can be obtained from $G$ by a sequence of vertex deletions, edge deletions, and edge contractions.
An edge contraction is performed by identifying the endpoints of an edge in $E(G)$ and then deleting any loops and double edges thus created.

For $k\ge 1$, we say that $G$ is \textit{k-compliant} when $G$ or $\overline{G}$ contains a minor $H$ with $\Delta(H) \geq |G|-k$. Otherwise, $G$ is  \textit{k-non-compliant}.
It follows that for  $k\ge 1$, if $G $ is $k$-compliant then $G$ is $(k+1)$-compliant. Notice that a graph $G$ is 1-compliant if and only if the graph or its complement have an isolated vertex/ a dominating vertex (cone). A graph is 2-compliant if and only if, in either the graph or its complement, one edge contraction yields a dominant vertex. We also note that a disconnected graph is 2-compliant. 
We show in Theorem \ref{theo:compliance} that  $G$ is $k$-compliant if and only if $\min\{\gamma_c(G), \gamma_c(\overline{G})\} \leq k$. In Section 5, we provide results about general $k$-non-compliant graphs. In this setting, the question posed by Karami et al. \cite{KSKW} is equivalent to finding the minimum order of a 3-noncompliant graph. We answer this with Theorem 6.\\

{\bf Theorem} Up to isomorphism, the Paley 13-graph $QR_{13}$ is the only 3-non-compliant graph of order 13.\\

We also find all 3-non-compliant graphs of order 14.\\

{\bf Theorem} Up to isomorphism, there are only two 3-non-compliant graphs of order 14: the graph obtained by adding a new vertex to $QR_{13}$ connected to all the vertices of the open neighborhood of an existing vertex, and the graph obtained by adding a new vertex to $QR_{13}$ connected to all the vertices of the closed neighborhood of an existing vertex.


\section{Three-Non-Compliant Graphs}

The following result follows directly from the definition of a 3-non-compliant graph. 

\begin{lemma}
\label{3set}
Let $G$ be a graph that is $3$-non-compliant, and let $S \subseteq V(G)$ be such that $|S| = 3$. Then
\begin{itemize}
    \item if $G[S]$ is connected, there exists $v \in V \setminus S$ such that $(u,v) \not \in E(G)$ for all $u \in S$
    \item if $\overline{G}[S]$ is connected, there exists $v \in V \setminus S$ such that $(u,v) \in E(G)$ for all $u \in S$ 
\end{itemize}
\end{lemma}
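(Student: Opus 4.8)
The plan is to unpack the definition of $k$-non-compliance directly, since the statement is flagged as following immediately from that definition. Recall that $G$ is $3$-non-compliant precisely when neither $G$ nor $\overline{G}$ contains a minor $H$ with $\Delta(H) \geq |V(G)| - 3$. The key observation I would exploit is that contracting the three vertices of a connected set $S$ down to a single vertex is a legal sequence of minor operations, and the degree of the resulting vertex in the minor counts exactly the vertices outside $S$ that are adjacent to at least one element of $S$.

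For the first bullet, suppose $G[S]$ is connected with $S = \{a, b, c\}$ and $|V(G)| = n$. Because $G[S]$ is connected on three vertices, its edges span a connected subgraph, so I can contract $S$ to a single vertex $w$ via two edge contractions along edges of $G[S]$, obtaining a minor $H$ on $n-2$ vertices. In $H$, the contracted vertex $w$ is adjacent to exactly those vertices $v \in V \setminus S$ for which $(u,v) \in E(G)$ for some $u \in S$; loops and multiple edges are discarded, so each such $v$ contributes degree exactly one to $w$. Thus $\deg_H(w)$ equals the number of vertices in $V \setminus S$ that have at least one neighbor in $S$. If \emph{every} vertex $v \in V \setminus S$ were adjacent to some vertex of $S$, then $\deg_H(w) = |V \setminus S| = n - 3$, giving $\Delta(H) \geq n - 3 = |V(G)| - 3$ and contradicting $3$-non-compliance. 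Hence some $v \in V \setminus S$ is adjacent to no vertex of $S$, which is exactly the conclusion $(u,v) \not\in E(G)$ for all $u \in S$.

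The second bullet is the same argument applied to $\overline{G}$ in place of $G$: if $\overline{G}[S]$ is connected, contracting $S$ inside $\overline{G}$ produces a minor of $\overline{G}$ whose contracted vertex has degree equal to the number of $v \in V \setminus S$ adjacent in $\overline{G}$ to some element of $S$. Were all such $v$ adjacent in $\overline{G}$ to $S$, that minor would have maximum degree at least $n-3$, again violating $3$-non-compliance; so some $v \in V \setminus S$ has no $\overline{G}$-neighbor in $S$, meaning $(u,v) \in E(G)$ for all $u \in S$.

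I do not anticipate a serious obstacle here, as the result is essentially a translation of the definition. The only point requiring minor care is verifying that the contraction of a connected three-vertex set genuinely yields a single vertex whose adjacencies are the union of the adjacencies of the three original vertices, and that this union is counted without multiplicity after deleting double edges — which is precisely why $\deg_H(w)$ records the number of outside vertices \emph{dominated} by $S$ rather than a sum of degrees. Making explicit that a connected graph on three vertices always admits such a contraction (it contains a spanning tree with two edges) closes the argument cleanly.
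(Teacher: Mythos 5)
Your proof is correct and is exactly the argument the paper has in mind: the paper offers no written proof, asserting only that the lemma ``follows directly from the definition,'' and your contraction of the connected triple $S$ to a single vertex of degree equal to the number of dominated outside vertices is precisely that unpacking (it is the $k=3$ case of the reverse implication in Theorem~\ref{theo:compliance}). No gaps.
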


\begin{lemma}
\label{j3}
For $G$ a  3-non-compliant graph, 
$$|N_G(u) \cap N_G(v)| \geq  3,$$
for any $(u,v) \not \in E(G). $
\end{lemma}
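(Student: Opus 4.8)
The plan is to argue by contradiction. Assume $G$ is $3$-non-compliant, $(u,v)\notin E(G)$, and $|N_G(u)\cap N_G(v)|\le 2$; I will produce a minor of $G$ or of $\overline{G}$ having a vertex of degree at least $n-3$ (where $n=|V(G)|$), i.e.\ a witness that $G$ is $3$-compliant, contradicting the hypothesis. I work mostly in $\overline{G}$, where $(u,v)$ is an edge. I partition $V\setminus\{u,v\}$ into $A=N_G(u)\cap N_G(v)$ together with the sets $B,C,D$ of vertices that are $G$-adjacent to only $u$, only $v$, and to neither, respectively. The one computation I need is that contracting the edge $uv$ in $\overline{G}$ produces a vertex whose non-neighbours are exactly $A$, hence of degree $(n-2)-|A|$.

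First I would clear the case $|A|\le 1$: the contracted vertex then has degree at least $n-3$, so $\overline{G}$ has a minor of maximum degree $\ge n-3$ and $G$ is $3$-compliant. This already yields the naive bound $|A|\ge 2$, and all the difficulty is concentrated in the case $|A|=2$.

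For $|A|=2$ I would write $A=\{a_1,a_2\}$ and split on whether $a_1$ and $a_2$ have a common neighbour in $\overline{G}$. If some $x\in N_{\overline G}(a_1)\cap N_{\overline G}(a_2)$ exists, then $x\in B\cup C\cup D$ and so $x$ is $\overline G$-adjacent to $u$ or $v$; since $\{u,v\}$ already $\overline G$-dominates $V\setminus A$ and $x$ covers $a_1,a_2$, the connected set $\{u,v,x\}$ dominates $\overline G$, and contracting it gives a minor of $\overline G$ with a vertex of degree $n-3$. If instead $a_1$ and $a_2$ have no common $\overline G$-neighbour, then every vertex other than $a_1,a_2$ is $G$-adjacent to $a_1$ or to $a_2$, so $\{a_1,a_2\}$ dominates $G$; using $u\in N_G(a_1)\cap N_G(a_2)$ as a connector, the path $a_1\,u\,a_2$ is a connected dominating set of $G$ of size three, whose contraction gives a minor of $G$ with a vertex of degree $n-3$. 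Either branch forces $3$-compliance, so $|A|\ge 3$.

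The main obstacle is exactly the case $|A|=2$: contracting $uv$ only reaches degree $n-4$ there, and since contractions that do not feed into the hub never raise its absolute degree, one is forced to find a genuine size-three connected dominating set. The decisive point is that the two candidate sets live in different graphs---$\{u,v,x\}$ in $\overline G$ precisely when the two common neighbours share a $\overline G$-neighbour, and $\{a_1,u,a_2\}$ in $G$ otherwise---with the ``otherwise'' branch being exactly the assertion that $\{a_1,a_2\}$ dominates $G$, repaired into a connected set by $u$. (Lemma \ref{3set} offers an alternative way to manufacture auxiliary vertices, but the dichotomy above avoids it.) As a tightness check I would verify the bound against the Paley graph $QR_{13}$, strongly regular with parameters $(13,6,2,3)$: every non-adjacent pair there has exactly three common neighbours, which both shows $3$ is best possible and explains why neither branch can be triggered when $|A|\ge 3$, since in the self-complementary $QR_{13}$ any two vertices share a common neighbour in each of $G$ and $\overline G$.
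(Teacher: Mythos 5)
Your proof is correct and follows essentially the same route as the paper's: the same dichotomy on whether some vertex is $\overline{G}$-adjacent to every element of $N_G(u)\cap N_G(v)$, yielding the same two connected dominating sets of size at most three, namely $\{u,v,x\}$ in $\overline{G}$ and $(N_G(u)\cap N_G(v))\cup\{u\}$ in $G$. Your separate treatment of $|N_G(u)\cap N_G(v)|\le 1$ by contracting the edge $uv$ of $\overline{G}$ is only a packaging difference, as the paper's two cases already cover that situation.
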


\begin{proof}
Let us assume on the contrary $G$ is 3-non-compliant and $|N(u) \cap N(v)| \leq 2$ for $(u,v) \not \in E(G)$.
 If there exists a vertex $w \in N_{\overline{G}}(u) \cup N_{\overline{G}}(v)$ such that $(w, v_i) \in E(\overline{G})$ for each $v_i \in N_G(u) \cap N_G(v)$, then we see $\gamma_c(\overline{G}) \leq 3$ by taking $S = \{u, v, w\}$.

Otherwise, for each vertex  $w \in N_{\overline{G}}(u) \cup N_{\overline{G}}(v)$, we have $(w, v_i) \in E(G)$ for some $v_i \in N_G(u) \cap N_G(v)$.
 By taking $S = (N(u) \cap N(v)) \sqcup \{u\}$ or $(N(u) \cap N(v)) \sqcup \{v\}$, we see $\gamma_c(G) \leq 3$. 

In either case, for $G$ 3-non-compliant, $|N(u) \cap N(v)| \geq  3$ for $(u,v) \not \in E(G)$, a contradiction.
\end{proof}

\begin{lemma}
\label{i2}
For $G$ a 3-non-compliant graph, $$|N_G(u) \cap N_G(v)| \geq 2,$$
for any $(u,v) \in E(G)$.
\end{lemma}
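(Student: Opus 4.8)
The plan is to mimic the structure of the proof of Lemma~\ref{j3}, now working with an edge $(u,v)\in E(G)$ rather than a non-edge. The key structural tool will again be Lemma~\ref{3set}: since $(u,v)\in E(G)$, any set $\{u,v,w\}$ with $w$ adjacent in $G$ to at least one of $u,v$ induces a connected subgraph of $G$, and the first bullet of Lemma~\ref{3set} forces a vertex outside this triple that is non-adjacent in $G$ to all three. Dually, we can look for triples that induce a connected subgraph of $\overline{G}$ and apply the second bullet. The strategy is to suppose for contradiction that $|N_G(u)\cap N_G(v)|\le 1$ and exhibit a connected dominating set of size $\le 3$ in either $G$ or $\overline{G}$, contradicting $3$-non-compliance via Theorem~\ref{theo:compliance}.

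First I would set $W=N_G(u)\cap N_G(v)$ and assume $|W|\le 1$. The reasoning splits on where the common non-neighbors go, paralleling the two cases in Lemma~\ref{j3}. In the favorable case, I would try to show that $S=\{u,v\}$ together with one well-chosen extra vertex is a connected dominating set: every vertex $x\notin\{u,v\}$ that fails to be dominated by $u$ or $v$ in $G$ lies in $N_{\overline G}(u)\cap N_{\overline G}(v)$, and because $W$ is so small, the edge $(u,v)$ plus these common $\overline{G}$-neighbors should let me cover all of $V(G)$ with at most three vertices in one of the two graphs. Concretely, I expect that if there is a vertex $w$ adjacent (in the appropriate graph) to every vertex of the small exceptional set $W$, then $\{u,v,w\}$ dominates and induces a connected subgraph, giving $\gamma_c\le 3$; otherwise, every such exceptional vertex is handled by $u$ or $v$ directly and a two-or-three element set already dominates.

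The main obstacle will be bookkeeping the adjacencies so that the chosen triple is simultaneously \emph{dominating} and \emph{connected} in the \emph{same} graph. Unlike the non-edge case, here $(u,v)\in E(G)$ and $(u,v)\notin E(\overline G)$, so connectivity in $\overline{G}$ of a set containing both $u$ and $v$ is no longer automatic and must be routed through a common $\overline{G}$-neighbor; I would need to verify that such a neighbor exists or else work entirely in $G$. I anticipate that the case analysis is slightly more delicate than in Lemma~\ref{j3} precisely because the roles of $G$ and $\overline{G}$ are swapped relative to the containing edge, so the clean symmetry is broken.

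Once the triple is produced, the finish is routine: a connected dominating set of size at most $3$ in $G$ means $\gamma_c(G)\le 3$, and likewise in $\overline{G}$ means $\gamma_c(\overline G)\le 3$, so $\min\{\gamma_c(G),\gamma_c(\overline G)\}\le 3$, whence $G$ is $3$-compliant by Theorem~\ref{theo:compliance}, contradicting the hypothesis. Therefore $|N_G(u)\cap N_G(v)|\ge 2$ for every edge $(u,v)\in E(G)$, as claimed.
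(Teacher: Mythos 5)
Your overall strategy (contradiction by exhibiting a connected dominating set of size at most $3$, using Lemma~\ref{3set}) matches the paper's, and you correctly identify the central obstacle: since $(u,v)\notin E(\overline{G})$, a triple containing both $u$ and $v$ is not automatically connected in $\overline{G}$. But you leave that obstacle unresolved (``I would need to verify that such a neighbor exists or else work entirely in $G$''), and the surrounding bookkeeping is off: the vertices that fail to be dominated by $\{u,v\}$ in $G$ form the set $N_{\overline{G}}(u)\cap N_{\overline{G}}(v)$, which is unrelated to $W=N_G(u)\cap N_G(v)$ being small and may be large, so the ``favorable case'' as you describe it does not get off the ground. The set that \emph{is} small is the set of vertices not dominated by $\{u,v\}$ in $\overline{G}$, namely $W$ itself.

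The missing idea --- the heart of the paper's proof --- is to apply Lemma~\ref{3set} to a triple chosen so that its conclusion delivers connectivity and domination simultaneously. If $W=\emptyset$: either every other vertex is $G$-adjacent to $u$ or to $v$, in which case $\{u,v\}$ is already a connected dominating set of $G$, or there exists $w$ with $(u,w),(v,w)\notin E(G)$; then $\overline{G}[\{u,v,w\}]$ is connected and the second bullet of Lemma~\ref{3set} produces a common $G$-neighbor of $u,v,w$, contradicting $W=\emptyset$. If $W=\{w_0\}$: the triple $\{u,v,w_0\}$ induces a connected subgraph of $G$, so the first bullet of Lemma~\ref{3set} gives a vertex $x$ non-adjacent in $G$ to all of $u$, $v$, $w_0$. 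This single vertex $x$ both joins $u$ to $v$ inside $\overline{G}$ and dominates $w_0$, which is the unique vertex not covered by $\{u,v\}$ in $\overline{G}$; hence $\{u,v,x\}$ is a connected dominating set of $\overline{G}$ and $\gamma_c(\overline{G})\le 3$, the desired contradiction. Without this specific choice of triple your sketch does not close, so as written there is a genuine gap.
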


\begin{proof}
    Let us assume on the contrary $G$ is 3-non-compliant and $|N(u) \cap N(v)| \leq 1$ for $(u,v) \in E(G)$. If  $|N(u) \cap N(v)|=0$, since $G$ is 3-non-compliant, it must be that there exists $w\in V(G)\setminus \{u,v\}$ such that $(v,w),(u,w)\notin E(G)$. By Lemma \ref{3set}, it follows that $u$, $v$, and $w$ must share a neighbor in $V(G)$, thus $|N(u) \cap N(v)|>0$. Assume $|N(u) \cap N(v)|=1$, that is there is a vertex $w_0 \in V(G)$ such that $|N(u) \cap N(v)|=\{w_0\}$. In $\overline{G}$, by Lemma \ref{3set}, the set $\{u,v,w_0\}$ must share a common neighbor, say $x$. Then $\{u,v,x\}$ forms a connected 3-dominating set in $\overline{G}$, a contradiction.
\end{proof}

\begin{corollary}
\label{deg6}
If $G$ is 3-non-compliant, then $|G|  \geq 13.$
\end{corollary}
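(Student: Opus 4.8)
The plan is to derive a lower bound on the minimum degree $\delta^*(G)$ from the two preceding lemmas and then convert this into a lower bound on the order $|G|$ via a counting argument. First I would observe that Lemmas \ref{j3} and \ref{i2} together control how many common neighbors any pair of vertices must have, depending on whether they are adjacent or not. The natural quantity to estimate is the minimum degree of $G$: if some vertex $u$ has small degree, I want to show that one of its neighbors (or non-neighbors) is forced to have too few common neighbors with $u$, contradicting the appropriate lemma.

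Concretely, I would fix a vertex $u$ of minimum degree $\delta(G)=d$ and count. Pick any neighbor $v$ of $u$. By Lemma \ref{i2}, $u$ and $v$ share at least $2$ common neighbors, so $N_G(u)\cap N_G(v)$ is nonempty and in particular $v$ has at least a few neighbors inside $N_G[u]$. More usefully, I would examine a non-neighbor: since the argument is symmetric in $G$ and $\overline{G}$ (a $3$-non-compliant graph has a $3$-non-compliant complement), I expect to get a matching bound $\delta(\overline{G})$ from Lemma \ref{j3}. The cleanest route is probably to show directly that $\delta^*(G)\ge 6$, mirroring the $\delta^*(G)=6$ value appearing in the Karami et al.\ equality case; this is the intended punchline given the earlier remark. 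With $\delta(G)\ge 6$ and $\delta(\overline G)\ge 6$ established, every vertex has degree between $6$ and $n-1-6=n-7$, which forces $6\le n-7$, i.e. $n\ge 13$.

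For the degree bound itself, the key step is a local double-count around a single vertex $u$. Suppose $\deg_G(u)=d$. For the inequality $\gamma_c(G)>3$ to survive all small candidate sets, I would tally, over the non-neighbors of $u$, the common-neighbor constraint from Lemma \ref{j3}: each of the $n-1-d$ non-neighbors shares at least $3$ neighbors with $u$, and these common neighbors all lie in $N_G(u)$, which has size $d$. Pairing this with the adjacent-vertex constraint from Lemma \ref{i2} applied inside $N_G[u]$, I expect to extract an inequality relating $d$ and $n$ that, combined with the symmetric statement in $\overline{G}$, pins down $\delta^*(G)\ge 6$ and hence $n\ge 13$.

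The main obstacle will be making the counting tight enough to reach the bound $6$ rather than some weaker value like $3$ or $4$. The raw statements of Lemmas \ref{j3} and \ref{i2} only give common-neighbor counts of $3$ and $2$, so a crude count risks yielding $\delta^*(G)\ge 3$, which would only give $n\ge 9$. To sharpen this to $\delta^*(G)\ge 6$ I anticipate needing to iterate the lemmas — applying the $3$-non-compliance condition not just to pairs but to carefully chosen triples via Lemma \ref{3set}, and possibly arguing that the common neighborhoods of several vertices cannot all overlap too much inside a small $N_G(u)$ without creating a dominating triple in $G$ or $\overline{G}$. Balancing these overlap constraints to rule out $d\le 5$ is the delicate part; the rest is bookkeeping.
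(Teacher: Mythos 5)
Your closing step is fine: once $\delta^*(G)\ge 6$ is known, every vertex satisfies $6\le\deg_G(v)$ and $6\le\deg_{\overline{G}}(v)=n-1-\deg_G(v)$, so $n\ge 13$. The gap is that you never actually establish $\delta^*(G)\ge 6$, and the route you sketch does not reach it. The local double count around a vertex $u$ with $\deg_G(u)=d$ that you describe gives, using Lemmas \ref{j3} and \ref{i2}, only $\sum_{v\in N_G(u)}\deg_G(v)\ge d+2d+3(n-1-d)=3(n-1)$; since the left-hand side is at most $d(n-1)$, this yields $d\ge 3$, and the symmetric count in $\overline{G}$ gives $\delta^*(G)\ge 3$, hence only $n\ge 7$. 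You correctly flag that sharpening $3$ to $6$ is ``the delicate part,'' but you leave it entirely open, and it is not mere bookkeeping: ruling out $3\le\delta^*(G)\le 5$ by overlap arguments would amount to reproving a genuinely nontrivial result.

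The paper sidesteps all of this. Three-non-compliance means (by Theorem \ref{theo:compliance}, or directly from the definition of $k$-compliance) that $\gamma_c(G)\ge 4$, $\gamma_c(\overline{G})\ge 4$, and both $G$ and $\overline{G}$ are connected. The Nordhaus--Gaddum bound of Karami et al.\ quoted as Theorem \ref{th-karami} then gives $8\le\gamma_c(G)+\gamma_c(\overline{G})\le\delta^*(G)+2$, i.e.\ $\delta^*(G)\ge 6$, in one line; Lemmas \ref{j3} and \ref{i2} play no role in this corollary. To repair your argument you must either invoke Theorem \ref{th-karami} explicitly (after translating 3-non-compliance into the hypothesis $\gamma_c(G),\gamma_c(\overline{G})\ge 4$, a translation your write-up also omits) or supply the missing combinatorial proof that $\delta^*(G)\ge 6$; as written, the proposal does neither.
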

\begin{proof}
Let $G$ be 3-non-compliant graph. 
Then $\gamma_c(G), \gamma_c(\overline{G}) \geq 4$, and both $G$ and $\overline{G}$ are connected.
Theorem \ref{th-karami} implies $8 \le  \delta^*(G)+2,$ that is $ \delta^*(G)\ge 6$.
The smallest possible order for $G$ with $ \delta^*(G)\ge 6$ is $n=13$.
\end{proof}


If $G$ is a 3-non-compliant graph of order 13, Corollary \ref{deg6} shows $G$ must be 6-regular. There are 367,860 non-isomorphic 6-regular graphs of order 13. However, only one of them is 3-non-compliant. In \cite{PP}, the last two authors gave another proof for the following theorem. We present a shorter proof which uses Lemmas \ref{j3} and \ref{i2}. 

\begin{theorem} Up to isomorphism, the Paley 13-graph $QR_{13}$ is the only 3-non-compliant graph of order 13.
\label{QR13}
\end{theorem}
\begin{proof}
By Corollary \ref{deg6}, if $G$ is a 3-non-compliant graph of order 13, it must be 6-regular. 
Let $v\in V(G)$. Let  $N:=N_G(v)$, let $H=G\setminus V(N)\setminus \{v\}$ be the subgraph of $G$ induced by the non-neighbors of $v$, and let $L$ denote the bipartite subgraph of $G$ such that $V(L)=V(G)$, and $E(L)$ consists of those edges of $G$ with one endpoint in $N$ and the other in $H$.  
By Lemma \ref{j3}, each vertex of $H$ must be adjacent to at least 3 vertices of $N$, thus $|L|\ge 6\cdot 3=18$. By Lemma \ref{i2}, $\delta(N)\ge 2$, thus $|N|\ge 6$. Adding the degrees in $G$ of the vertices of $N$,
we get
\[36=\sum_{v\in N}deg_G(v)=6+2|N|+|L|\ge 6+12+18=36,\] which shows that $N$ is 2-regular, and $L$ is 3-regular.
Since the same must hold for every vertex of $G$, lest $G$ be 3-compliant, it follows that $G$ is a strongly regular $(13,6,2,3)$ graph, thus $G$ is isomorphic to the Paley 13-graph.\\\\
On the other hand, if $G$ is isomorphic to the Paley 13-graph, up to isomorphism, there are exactly 13 non-isomorphic minors obtained by contracting two edges of $G$, and each of them has maximum degree at most 9. 
Since the Paley 13-graph is self-complementary, it follows that $G$ is 3-non-compliant.  
\end{proof}

\begin{proposition}
\label{twin}
Let $u$ and $v$ be twin vertices of a graph $G$ ($N_G(u) =N_G(v)$). For $k\ge 2$, $G$ is $k$-non-compliant if and only if $G':=G-v$ is $k$-non-compliant.

\end{proposition}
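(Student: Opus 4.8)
The plan is to pass from the combinatorics of minors to connected domination using the characterization of Theorem~\ref{theo:compliance}, namely that $G$ is $k$-compliant if and only if $\min\{\gamma_c(G),\gamma_c(\overline{G})\}\le k$. Thus it suffices to show that, for $k\ge 2$, the twin deletion $G\mapsto G'=G-v$ preserves the truth value of each of the two inequalities $\gamma_c(G)\le k$ and $\gamma_c(\overline{G})\le k$. The key structural observation is a duality between the two: since $N_G(u)=N_G(v)$ forces $u\not\sim v$ (otherwise $v\in N_G(u)=N_G(v)$, impossible in a loopless graph), the pair $u,v$ is a pair of non-adjacent twins in $G$, while in $\overline{G}$ the same pair satisfies $N_{\overline{G}}[u]=N_{\overline{G}}[v]$ with $u\sim v$, i.e.\ they are adjacent twins. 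Moreover $\overline{G'}=\overline{G}-v$, so the two inequalities above become two instances of the same deletion problem, one for each flavor of twin.

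First I would dispose of the adjacent-twin case, which I expect to behave cleanly. If $u,v$ are adjacent twins in a graph $\Gamma$ (so $N_\Gamma[u]=N_\Gamma[v]$), then $v$ is not a cut vertex, as every neighbor of $v$ is adjacent to $u$, so $\Gamma$ is connected if and only if $\Gamma-v$ is. For a minimum connected dominating set $S$ of $\Gamma$ one checks, by replacing $v$ with $u$ when $v\in S$ and $u\notin S$, deleting $v$ when both lie in $S$, and leaving $S$ untouched when $v\notin S$, that $\gamma_c(\Gamma-v)\le\gamma_c(\Gamma)$; conversely any connected dominating set of $\Gamma-v$ already dominates $v$ (through $u$ if $u$ is in it, otherwise through a common neighbor) and is connected in $\Gamma$, giving the reverse inequality. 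Hence $\gamma_c(\Gamma)=\gamma_c(\Gamma-v)$ exactly, and in particular $\gamma_c(\overline{G})\le k\iff\gamma_c(\overline{G'})\le k$ with no restriction on $k$.

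The non-adjacent-twin case, applied to $\Gamma=G$, is where the hypothesis $k\ge 2$ is essential and is the main obstacle. The same swap/delete bookkeeping gives $\gamma_c(G-v)\le\gamma_c(G)$, but the reverse inequality can fail by one: if the only single-vertex dominating set of $G-v$ is the surviving twin $u$, then $u$ dominates everything except $v$ (since $u\not\sim v$), so $\gamma_c(G-v)=1$ while $\gamma_c(G)=2$. I would show this is the only obstruction, i.e.\ either $\gamma_c(G)=\gamma_c(G-v)$, or $\gamma_c(G-v)=1$ and $\gamma_c(G)=2$; the point is that whenever $\gamma_c(G-v)\ge 2$ the relevant connecting vertex has a neighbor in the set, which is automatically a neighbor of $v$ as well (as $N_G(u)=N_G(v)$), so $v$ gets dominated and no extra vertex is needed. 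Since both exceptional values $1$ and $2$ are $\le k$ once $k\ge 2$, the equivalence $\gamma_c(G)\le k\iff\gamma_c(G-v)\le k$ holds for all $k\ge 2$, and indeed fails precisely at $k=1$.

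It then remains to assemble the pieces and to handle the degenerate disconnected cases separately, which I would route through the complement: if $G$ (or $G'$) is disconnected, then its complement is connected with $\gamma_c\le 2\le k$ (a connected dominating set of size two is obtained from one vertex in each of two components), so both $G$ and $G'$ are $k$-compliant, consistent with the statement. Combining the adjacent-twin equality for $\overline{G}$ with the non-adjacent-twin equivalence for $G$, I conclude that for $k\ge 2$ one has $\min\{\gamma_c(G),\gamma_c(\overline{G})\}\le k$ if and only if $\min\{\gamma_c(G'),\gamma_c(\overline{G'})\}\le k$, which by Theorem~\ref{theo:compliance} is exactly the assertion that $G$ is $k$-non-compliant if and only if $G'$ is $k$-non-compliant.
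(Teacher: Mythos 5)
Your proof is correct and follows essentially the same route as the paper's: both reduce $k$-compliance to the condition $\min\{\gamma_c(G),\gamma_c(\overline{G})\}\le k$ and transfer connected dominating sets across the twin deletion by swapping $v$ for $u$ or using that any dominator of $u$ also dominates $v$. Your write-up is somewhat more carefully organized --- you correctly observe that open twins are non-adjacent in $G$ and become adjacent twins in $\overline{G}$ (where the paper's ``without loss of generality $(u,v)\in E(G)$'' is loose), and you isolate exactly where the hypothesis $k\ge 2$ is needed --- but the underlying argument is the same.
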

\begin{proof} Note that if $u$ and $v$ are twin vertices of $G$, then they are also twin vertices of $\overline{G}$. So, without loss of generality, we may assume $(u,v)\in E(G)$.

Assume that $G$ is $k$-non-compliant and that $G'$ is $k$-compliant. If $\gamma_c(G')\le k$, let $S=\{u_1,...,u_k\}$ be a connected dominating set in $G'$. Since $S$ is connected, there exist $1\le i\le k$ such that $(u,u_i)\in E(G')$. Since $u$ and $v$ were twin vertices in $G$, $(v,u_i)\in E(G)$, so $S$ is a connected dominating set of $G$, a contradiction. 

If $\gamma_c(\overline{G'})\le k$, let $S=\{u_1,...,u_k\}$ is a connected dominating set of $\overline{G'}$. Since $S$ is connected, there is $1\le i\le k$ such that $(u,u_i)\in E(\overline{G'})$. As $u$ and $v$
 are twins, $(v,u_i)\in E(\overline{G})$, so $S$ dominates $\overline{G}$.\\
 
 Assume that $G$ is $k$-compliant and that $G'$ is $k$-non-compliant. If $\gamma_c(G)\le k$, let $S=\{u_1,...,u_k\}$ be a connected dominating set in $G$. As $G'$ is $k$-non-compliant, $v\in S$. Since $u$ and $v$ have the same neighbors in $G$, $(S-v)\cup \{u\}$ is a connected dominating set of $G'$ of size $k$.
A contradiction, since $G'$ is $k$-non-compliant.

 If $\gamma_c(\overline{G})\le k$, let $S=\{u_1,...,u_k\}$ be a connected dominating set in $\overline{G}$. If $v\in S$, then $(S-v)\cup \{u\}$ is a connected dominating set of $\overline{G'}$. If $v\notin S$, then $S$ is a connected dominating set of $\overline{G'}$. In either case, $G'$ must be $k$-compliant, a contradiction.
\end{proof}

Note that deleting a vertex from the Paley 13 graph yields a subgraph of order 12 which, by Corollary \ref{deg6}, it must be 3-compliant, but 2-non-compliant. 
We pose the following question:\\
 
\noindent{\textbf{Question 1}: Under what condition does adding a vertex to a $k$-compliant graph yields a $k$-non-compliant graph?\\

Proposition \ref{twin} allows one to build 3-non-compliant graphs of order at least 13 by repeatedly adding twin vertices. For order 14, one obtains two non-isomorphic structures: adding a new vertex $v$ to the open/closed neighborhood of a vertex in the $QR_{13}$ graph. Theorem \ref{order14} shows that these two are the only 3-noncompliant graphs of order 14. \\

Lemmas \ref{j3} and \ref{i2} imply that if $G$ is 3-non-compliant, then every pair of adjacent vertices have at least two common neighbors and every pair of non-adjacent vertices have at least three common neighbors.
For $n=14$, the possible degrees for a vertex of a 3-non-compliant graph are 6 and 7. 
This implies the number of edges for a 3-non-compliant graph of order 14 is $42\le m\le 49$.
To find all 3-non-compliant graphs it suffices to find those graphs with $42\le m\le 45$, then check their complements. More restrictions can be derived by inspecting the possible configurations of the neighborhoods of vertices of degree 6.


\begin{lemma}
    Let $G$ be a graph with $14$ vertices and $u\in V(G)$ with $\deg_G(u)=6$. If $G$ is 3-non-compliant, then
    $$\sum_{v\in N_G(u)}\deg_G{v}\ge 39.$$
    \label{l39}
\end{lemma}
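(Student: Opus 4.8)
The plan is to bound the degree sum by partitioning the edges incident to the neighborhood $N := N_G(u)$ according to the location of their other endpoint. Set $H := V(G) \setminus N_G[u]$, the set of non-neighbors of $u$; since $\deg_G(u)=6$ and $|V(G)|=14$ we have $|N|=6$ and $|H|=7$, and $V(G)$ is the disjoint union $\{u\}\sqcup N\sqcup H$. Every neighbor of a vertex $v\in N$ is then either $u$, another vertex of $N$, or a vertex of $H$, so that
$$\sum_{v\in N}\deg_G(v)=6+2\,|E(G[N])|+e(N,H),$$
where $e(N,H)$ is the number of edges with one endpoint in $N$ and one in $H$. It remains only to bound the last two terms from below.

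For the edges inside $N$, I would invoke Lemma \ref{i2}. Each $v\in N$ satisfies $(u,v)\in E(G)$, so $u$ and $v$ have at least two common neighbors; because $N_G(u)=N$, every such common neighbor already lies in $N$, whence $\deg_{G[N]}(v)=|N_G(u)\cap N_G(v)|\ge 2$. Summing over the six vertices of $N$ gives $2\,|E(G[N])|=\sum_{v\in N}\deg_{G[N]}(v)\ge 12$. For the crossing edges, I would invoke Lemma \ref{j3}: each $w\in H$ satisfies $(u,w)\notin E(G)$, so $u$ and $w$ have at least three common neighbors, all of which lie in $N$; thus $w$ has at least three neighbors in $N$, and summing over the seven vertices of $H$ yields $e(N,H)\ge 21$. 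Substituting both estimates gives $\sum_{v\in N}\deg_G(v)\ge 6+12+21=39$.

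This reduces to essentially the same degree-sum bookkeeping carried out in the proof of Theorem \ref{QR13}, so I do not anticipate a genuine obstacle; the only point requiring care is the observation that $N_G(u)=N$ forces all of the common neighbors supplied by Lemmas \ref{i2} and \ref{j3} into $N$, which is precisely what lets the three families of edges be counted separately without overlap. I would also record that the estimate is sharp: equality requires $G[N]$ to be $2$-regular and every vertex of $H$ to have exactly three neighbors in $N$, so the lemma pins down the extremal local structure that the subsequent case analysis for order $14$ will exploit.
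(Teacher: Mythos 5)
Your proposal is correct and follows essentially the same argument as the paper: decompose $\sum_{v\in N}\deg_G(v)$ as $6+2|E(G[N])|+e(N,H)$, then apply Lemma \ref{i2} to get $|E(G[N])|\ge 6$ and Lemma \ref{j3} to get $e(N,H)\ge 21$. The only difference is notational (the paper writes $|N|$ and $|L|$ for the sizes of the induced and bipartite subgraphs), and your added remark on when equality holds is consistent with how the paper later uses the bound.
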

\begin{proof}
Let $N=N_G(u)$, $H$ be the subgraph induced by $V(G)\setminus (V(N)\cup \{v\})$, and $L$ be the subgraph whose edges have one endpoint on $N$ and one endpoint in $H$. 
By Lemma \ref{j3}, each vertex of $H$ shares at least three neighbors with $u$, thus $|L|\ge 21.$
By Lemma \ref{i2}, $u$ shares at least two neighbors with each of of its neighbors, thus 
$\deg_N({v})\ge 2,$ for each $v\in N$ and $|N|\ge 6$.
We have $$\sum_{v\in N_G(u)}\deg_G(v) = 6 + 2|N| +|L|\ge 6 + 12 + 21 = 39. $$
\end{proof}

The next two lemmas partially strengthen the results of Lemmas \ref{j3} and \ref{i2} for graphs of order 14.

\begin{lemma} Let $G$ be a 3-non-compliant graph of order 14. Let $u,v \in V(G)$, such that $deg_G(u)=deg_G(v)=7$, and $(u,v)\notin E(G)$. Then $|N_G(u)\cap N_G(v)|\ge 4$.
\label{j4}
\end{lemma}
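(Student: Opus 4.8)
The plan is to pass to the complement and exploit the fact that, since $G$ is $3$-non-compliant, so is $\overline{G}$ (the notion of $k$-(non-)compliance is symmetric under complementation, as its definition only asks that $G$ \emph{or} $\overline{G}$ admit the relevant high-degree minor). The point is that although Lemma~\ref{j3} only guarantees three common neighbors for a non-adjacent pair, the \emph{adjacent} version, Lemma~\ref{i2}, applied inside $\overline{G}$ will produce the extra common neighbor we need.

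First I would set up the count. Since $(u,v)\notin E(G)$ and $\deg_G(u)=\deg_G(v)=7$, in $\overline{G}$ the vertices $u$ and $v$ are adjacent and each has degree $13-7=6$. Partition the twelve vertices of $V(G)\setminus\{u,v\}$ into the common neighbors $A=N_G(u)\cap N_G(v)$, the private neighbors $B=N_G(u)\setminus N_G(v)$ and $C=N_G(v)\setminus N_G(u)$, and the set $D$ of vertices adjacent (in $G$) to neither $u$ nor $v$. Writing $a=|A|$, we have $|B|=|C|=7-a$, so
$$a+(7-a)+(7-a)+|D|=12,$$
which gives $|D|=a-2$.

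The key observation is that the common neighbors of $u$ and $v$ \emph{in} $\overline{G}$ are exactly the vertices adjacent to neither $u$ nor $v$ in $G$, that is, the set $D$: a vertex $x\neq u,v$ lies in $N_{\overline{G}}(u)\cap N_{\overline{G}}(v)$ iff $(u,x),(v,x)\notin E(G)$ iff $x\in D$. Hence $|N_{\overline{G}}(u)\cap N_{\overline{G}}(v)|=|D|=a-2$. Since $u$ and $v$ are adjacent in $\overline{G}$ and $\overline{G}$ is $3$-non-compliant, Lemma~\ref{i2} forces $|N_{\overline{G}}(u)\cap N_{\overline{G}}(v)|\ge 2$, whence $a-2\ge 2$, i.e. $a\ge 4$, as desired.

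There is no genuine obstacle once one decides to work in the complement: the argument is a one-line count followed by a single application of Lemma~\ref{i2}. The only things to be careful about are the bookkeeping verifying that $D$ is precisely the common-neighbor set of $u,v$ in $\overline{G}$ (so that the degree-$7$, order-$14$ hypotheses feed correctly into $|D|=a-2$), and recording at the outset that $3$-non-compliance passes to $\overline{G}$ so that Lemma~\ref{i2} is available there. A more direct attack staying inside $G$—studying the unique non-neighbor $w$ of both $u$ and $v$ that exists when $a=3$ and chasing shared neighbors via Lemmas~\ref{3set} and~\ref{j3}—would also work, but it is messier; the complement trick sidesteps it entirely.
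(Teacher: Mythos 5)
Your proof is correct and follows essentially the same route as the paper's: both apply Lemma~\ref{i2} to the adjacent pair $u,v$ in $\overline{G}$ to get at least two common non-neighbors in $G$, and then a counting argument (your partition into $A,B,C,D$ versus the paper's inclusion--exclusion on $|N_G(u)\cup N_G(v)|\le 10$) yields $|N_G(u)\cap N_G(v)|\ge 4$. The two counts are equivalent, so there is nothing substantively different to flag.
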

\begin{proof}
By Lemma \ref{i2}, $u$ and $v$ share at least two neighbors in $\overline{G}$. As $u$ and $v$ are not adjacent in $G$, we have
\[10\ge |N_G(u)\cup N_G(v)|=|N_G(u)|+|N_G(v)|-|N_G(u)\cap N_G(v)|=7+7-|N_G(u)\cap N_G(v)|\]
\[\Rightarrow |N_G(u)\cap N_G(v)|\ge 14-10=4.\]
\end{proof}

\begin{lemma} Let $G$ be a 3-non-compliant graph of order 14. Let $u,v \in V(G)$, such that $deg_G(u)=deg_G(v)=7$, and $(u,v)\in E(G)$. Then $|N_G(u)\cap N_G(v)|\ge 3$.

\label{i3}
\end{lemma}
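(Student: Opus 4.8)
The plan is to exploit the fact that $3$-non-compliance is symmetric under complementation and then run a short double-counting argument on the twelve vertices other than $u$ and $v$. Directly from the definition of $k$-compliance (the defining condition refers to ``$G$ or $\overline{G}$'' and depends only on the order, which $G$ and $\overline{G}$ share), $\overline{G}$ is again $3$-non-compliant. In $\overline{G}$ the vertices $u$ and $v$ both have degree $13-7=6$, and since $(u,v)\in E(G)$ they are \emph{non}-adjacent in $\overline{G}$. This is exactly the hypothesis of Lemma \ref{j3} applied in $\overline{G}$ to the non-edge $(u,v)$, which yields $|N_{\overline{G}}(u)\cap N_{\overline{G}}(v)|\ge 3$.

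The key step is to translate this quantity in $\overline{G}$ back into $|N_G(u)\cap N_G(v)|$. I would partition the twelve vertices of $V(G)\setminus\{u,v\}$ by their adjacency to $u$ and $v$ in $G$: let $a$ be the number adjacent to both (the common neighbors to be bounded), $b$ the number adjacent to $u$ only, $c$ the number adjacent to $v$ only, and $d$ the number adjacent to neither. Since $\deg_G(u)=7$ and one neighbor of $u$ is $v$, we get $a+b=6$, and symmetrically $a+c=6$; combined with $a+b+c+d=12$ these force $d=a$. This identity works precisely because $14=2\cdot 7$, so that adjacent degree-$7$ vertices in a graph of order $14$ always have as many common non-neighbors as common neighbors.

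To close the argument, observe that a vertex $w\in V(G)\setminus\{u,v\}$ is a common neighbor of $u$ and $v$ in $\overline{G}$ if and only if it is adjacent to neither of them in $G$; hence $|N_{\overline{G}}(u)\cap N_{\overline{G}}(v)|=d$ (no spurious contributions arise from $u$ or $v$ themselves, as they are non-adjacent in $\overline{G}$). Putting everything together gives $|N_G(u)\cap N_G(v)|=a=d=|N_{\overline{G}}(u)\cap N_{\overline{G}}(v)|\ge 3$, which is the claimed bound.

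There is no serious obstacle once the degree identity $d=a$ is spotted; the argument is essentially a one-line count. The only points requiring care are confirming that Lemma \ref{j3} may legitimately be invoked in $\overline{G}$ (i.e.\ that $3$-non-compliance passes to the complement) and checking the partition counts, especially the complementary degree $\deg_{\overline{G}}(u)=6$ and the fact that the partition class ``adjacent to neither'' coincides exactly with the common neighborhood of $u$ and $v$ in $\overline{G}$.
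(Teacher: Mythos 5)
Your proposal is correct and follows essentially the same route as the paper: both apply Lemma \ref{j3} in the complement (where $u$ and $v$ are non-adjacent of degree $6$) to get at least three common non-neighbors in $G$, and then convert this to a lower bound on $|N_G(u)\cap N_G(v)|$ by a count over the remaining twelve vertices — the paper via inclusion--exclusion on $|N_G(u)\cup N_G(v)|\le 11$, you via the equivalent partition identity $d=a$.
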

\begin{proof}
By Lemma \ref{j3}, $u$ and $v$ share at least three neighbors in $\overline{G}$. As $u$ and $v$ are adjacent in $G$, we have
\[11\ge |N_G(u)\cup N_G(v)|=|N_G(u)|+|N_G(v)|-|N_G(u)\cap N_G(v)|=7+7-|N_G(u)\cap N_G(v)|\]
\[\Rightarrow |N_G(u)\cap N_G(v)|\ge 14-11=3.\]
\end{proof}

\begin{lemma} Graphs $G$ of order 14 with 42, 43, or 44 edges are 3-compliant. 
\label{lem:edges14}
\end{lemma}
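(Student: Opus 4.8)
The plan is to argue by contradiction, exploiting the fact established in the discussion preceding Lemma \ref{l39}: every vertex of a $3$-non-compliant graph of order $14$ has degree $6$ or $7$ (indeed $\delta^*(G)\ge 6$ by Corollary \ref{deg6}, so $\delta(G)\ge 6$, while $\delta(\overline{G})\ge 6$ forces each $\deg_G(w)\le 13-6=7$). Write $A$ for the set of degree-$6$ vertices and $B$ for the set of degree-$7$ vertices, and set $b=|B|$. Summing degrees gives $2|G|=6|A|+7|B|=6\cdot 14+b=84+b$, so $|G|=42+b/2$; in particular the three sizes in the statement $|G|\in\{42,43,44\}$ correspond exactly to $b\in\{0,2,4\}$, i.e. to $b\le 4$, with $|A|=14-b\ge 10$. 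Thus it suffices to show that a $3$-non-compliant graph of order $14$ must satisfy $b\ge 5$.

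Assume then, for contradiction, that $G$ is $3$-non-compliant with $b\le 4$. The key input is Lemma \ref{l39}: for each $u\in A$ we have $\sum_{v\in N_G(u)}\deg_G(v)\ge 39$. Since $u$ has exactly six neighbors, each of degree $6$ or $7$, if $k$ of them have degree $7$ then this sum equals $36+k$, forcing $k\ge 3$. Hence every vertex of $A$ has at least three neighbors in $B$. I would then double-count the edges running between $A$ and $B$: from the $A$-side this number is at least $3|A|=3(14-b)$, while from the $B$-side each of the $b$ degree-$7$ vertices is incident to at most $7$ such edges, giving the upper bound $7b$. Combining the two estimates yields
\[
3(14-b)\le 7b \quad\Longrightarrow\quad 42\le 10b \quad\Longrightarrow\quad b\ge 5,
\]
contradicting $b\le 4$. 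Therefore no $3$-non-compliant graph of order $14$ has $42$, $43$, or $44$ edges, so all such graphs are $3$-compliant.

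There is little genuine obstacle once Lemma \ref{l39} is in hand; the argument is a tight counting estimate whose binding case is $b=4$ (the $44$-edge graphs), where the lower bound $3\cdot 10=30$ just exceeds the upper bound $7\cdot 4=28$. The only steps requiring care are the conversion of the degree-sum inequality of Lemma \ref{l39} into the clean combinatorial statement that each degree-$6$ vertex has at least three degree-$7$ neighbors, and the bookkeeping that the $A$-side count tallies each $A$–$B$ edge exactly once (every such edge has precisely one endpoint in $A$). Both are elementary, so I expect the proof to be brief.
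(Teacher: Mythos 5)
Your proof is correct and follows essentially the same route as the paper: both arguments hinge on Lemma \ref{l39} forcing every degree-$6$ vertex to have at least three degree-$7$ neighbors, and then on counting edges between the degree-$6$ and degree-$7$ vertex classes to show there are too few degree-$7$ vertices when $|G|\le 44$. The paper splits into the cases $42$--$43$ (at most two degree-$7$ vertices exist at all) and $44$ (pigeonhole), whereas you fold all three sizes into one double-counting inequality $3(14-b)\le 7b$; this is a mild streamlining, not a different method.
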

\begin{proof}
For size $42, 43$, consider a vertex $u$ with $\deg(u)=6$. Since there are at most two vertices of degree 7 in $G$, the sum of degrees for the neighbors of $u$ is strictly less than 39. By Lemma \ref{l39}, the graph $G$ is 3-compliant.
\\
For size 44, the graph $G$ has four vertices of degree 7 and ten vertices of degree 6. 
Using the pigeonhole principle, we see that there exists a vertex of degree 6 which neighbors at most two vertices of degree 7.
By Lemma \ref{l39}, the graph $G$ is 3-compliant.
\end{proof}

\begin{lemma}  Let $G$ be a graph with $14$ vertices, size 45, and $u\in V(G)$ with $\deg_G(u)=7$. If $G$ is 3-non-compliant, then
    $$\sum_{v\in N(u)}\deg_G{v}\ge 44.$$
\label{l44}
\end{lemma}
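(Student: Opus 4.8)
The plan is to reuse the edge-counting decomposition from Lemma~\ref{l39}, but to sharpen each of the two contributing bounds using the degree-$7$ refinements in Lemmas~\ref{j4} and \ref{i3}. Set $N:=N_G(u)$, so $|V(N)|=7$; let $H$ be the subgraph induced by the six vertices of $V(G)\setminus(N\cup\{u\})$, and let $L$ be the bipartite subgraph of edges with one endpoint in $N$ and one in $H$. As in the earlier proofs, every edge incident to a vertex of $N$ runs to $u$, stays inside $N$, or lands in $H$, which yields the identity
$$\sum_{v\in N_G(u)}\deg_G(v)=7+2|N|+|L|,$$
where $|N|$ denotes the size of the subgraph induced on $N$. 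The naive bounds $2|N|\ge 14$ (from $\delta(N)\ge 2$, Lemma~\ref{i2}) and $|L|\ge 18$ (each of the six non-neighbors of $u$ shares at least three neighbors with $u$, all lying in $N$, by Lemma~\ref{j3}) only give the total $39$, so the task is to locate five more edges.

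The extra edges come from the degree-$7$ vertices. Since $G$ is $3$-non-compliant of order $14$, Corollary~\ref{deg6} forces $\delta^*(G)\ge 6$, so every degree lies in $\{6,7\}$; with size $45$ the degree sum is $90$, and solving $7x+6(14-x)=90$ shows there are exactly six vertices of degree $7$. Removing $u$ itself leaves five degree-$7$ vertices, say $a$ of them inside $N$ and $b$ of them inside $H$, with $a+b=5$.

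Now I would upgrade the two bounds vertex by vertex. A degree-$7$ vertex $v\in N$ is adjacent to $u$ and has the same degree, so Lemma~\ref{i3} gives $|N_G(u)\cap N_G(v)|\ge 3$; as these common neighbors all lie in $N$, we get $\deg_N(v)\ge 3$, one more than the generic bound. Summing, $2|N|=\sum_{v\in N}\deg_N(v)\ge 3a+2(7-a)=14+a$. Symmetrically, a degree-$7$ vertex $w\in H$ is a non-neighbor of $u$ of equal degree, so Lemma~\ref{j4} gives $|N_G(u)\cap N_G(w)|\ge 4$, i.e.\ $w$ sends at least four edges into $N$, one more than generic; hence $|L|\ge 4b+3(6-b)=18+b$. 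Combining,
$$\sum_{v\in N_G(u)}\deg_G(v)=7+2|N|+|L|\ge 7+(14+a)+(18+b)=39+(a+b)=44.$$

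I expect the only real content to be the counting step that pins down exactly five degree-$7$ vertices other than $u$, together with the observation that each such vertex---whether it falls in $N$ or in $H$---contributes precisely one extra edge to the running total; once that bookkeeping is set up, the inequality closes with no slack. The mild subtlety to check is that all common neighbors of $u$ with any vertex necessarily sit in $N$, so that the strengthened intersection bounds translate directly into degrees within $N$ and into $L$-edges, rather than being absorbed elsewhere.
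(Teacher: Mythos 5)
Your proof is correct, but it takes a genuinely different route from the paper's. The paper's argument never touches Lemmas~\ref{j4} and \ref{i3}: it simply passes to the complement, where $u$ has degree $6$ and $\overline{G}$ has $46$ edges, applies Lemma~\ref{l39} to get $\sum_{v\in N_{\overline{G}}[u]}\deg_{\overline{G}}(v)\ge 45$, and then uses the handshaking lemma plus $\deg_G(v)=13-\deg_{\overline{G}}(v)$ to convert that into the stated bound --- three lines, with no need to know the exact degree sequence. Your version stays inside $G$, pins down that there are exactly six degree-$7$ vertices (hence five besides $u$), and shows that each of them, whether it lands in $N$ or in $H$, contributes exactly one edge beyond the generic count of Lemma~\ref{l39} via the sharpened intersection bounds of Lemmas~\ref{i3} and \ref{j4}; the bookkeeping $2|N|\ge 14+a$, $|L|\ge 18+b$, $a+b=5$ is sound, and your side remark that all common neighbors of $u$ with any vertex lie in $N$ (so the intersection bounds really do convert into $N$-degrees and $L$-edges) is the right point to verify. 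What the paper's route buys is brevity and independence from the order-$14$ refinements; what yours buys is an explicit accounting of where the five extra edges come from, which makes the tightness of the bound transparent. The only cosmetic slip is the attribution: Corollary~\ref{deg6} as stated gives the order bound $|G|\ge 13$, and it is its proof (via Theorem~\ref{th-karami}) that yields $\delta^*(G)\ge 6$; you should cite that fact rather than the corollary's statement.
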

\begin{proof}
Let $N=N_G(u)$ and let $H=N_{\overline{G}}(u).$ By Lemma \ref{l39}, 
\[\sum_{v\in H}\deg_{\overline{G}}{v} \ge 39 \Rightarrow \sum_{v\in N_{\overline{G}}[u]}\deg_{\overline{G}}{v} \ge 45. \] By the handshaking lemma, 
\[\sum_{v\in N}deg_{\overline{G}}(v)\le 2|\overline{G}|-45=92-45=47.\] Taking complements,
\[\sum_{v\in N}deg_G(v)\ge 7\cdot 13-47=91-47=44.\]

\end{proof}

\begin{theorem} Up to isomorphism, there are only two 3-non-compliant graphs of order 14: the graph obtained by adding a new vertex to $QR_{13}$ connected to all the vertices of the open neighborhood of an existing vertex, and the graph obtained by adding a new vertex to $QR_{13}$ connected to all the vertices of the closed neighborhood of an existing vertex.
\label{order14}
\end{theorem}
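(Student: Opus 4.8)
The plan is to show that any $3$-non-compliant graph $G$ of order $14$ must contain a pair of twin vertices, and then to invoke Proposition \ref{twin} together with Theorem \ref{QR13} to identify $G$. By Proposition \ref{twin}, if $u,v$ are twins in $G$, then $G-v$ is a $3$-non-compliant graph of order $13$, hence isomorphic to $QR_{13}$. Reversing this, $G$ is obtained from $QR_{13}$ by appending a twin $v$ to some vertex $u$. Since $u,v$ are twins, $v$ is joined exactly to $N_{QR_{13}}(u)$ (if $(u,v)\notin E(G)$) or to $N_{QR_{13}}[u]=N_{QR_{13}}(u)\cup\{u\}$ (if $(u,v)\in E(G)$); as $QR_{13}$ is vertex-transitive, the choice of $u$ does not matter, and these are exactly the two graphs in the statement. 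So the entire content of the theorem reduces to producing a twin pair.

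To force a twin pair, I would first use Lemma \ref{lem:edges14} to eliminate sizes $42,43,44$, so that a $3$-non-compliant graph of order $14$ has size $45,46,47,48$ or $49$; by complementation (complements of $3$-non-compliant graphs are $3$-non-compliant, since $k$-non-compliance is self-complementary) it suffices to treat sizes $45,46,47$ against the complementary sizes, and in fact to pin down the degree sequence. With $42\le m\le 49$ and vertex degrees in $\{6,7\}$, size $m$ forces exactly $2m-84$ vertices of degree $7$ and the rest of degree $6$. The heart of the argument is size $45$: here there are exactly six vertices of degree $7$ and eight of degree $6$. For a degree-$7$ vertex $u$, Lemma \ref{l44} gives $\sum_{v\in N(u)}\deg_G(v)\ge 44$, forcing the neighborhood of $u$ to be heavily weighted toward degree-$7$ vertices; combined with the sharpened common-neighbor bounds of Lemmas \ref{j4} and \ref{i3} for degree-$7$ pairs, this should rigidly constrain how the six degree-$7$ vertices interconnect and how the degree-$6$ vertices attach, to the point where two vertices are forced to share an identical neighborhood.

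The main obstacle, and the step I expect to require the most care, is the combinatorial case analysis at size $45$: translating the degree-sum and common-neighbor inequalities into a proof that a twin pair must appear is not automatic, and one must rule out configurations that satisfy all the local inequalities yet have no two identical neighborhoods. I would organize this by fixing a degree-$7$ vertex $u$, writing $N=N_G(u)$ and $H=N_{\overline{G}}(u)$ with $|H|=6$, and counting edges across the $N$–$H$ cut using Lemma \ref{j3} (each vertex of $H$ has at least three neighbors in $N\cup\{u\}$) and the degree-$7$ refinements; equality-forcing in Lemma \ref{l44} should pin down $\deg$ values inside $N$, after which the few remaining adjacency patterns can be checked directly. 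Once a twin pair is located for every admissible degree sequence (the larger sizes $46,47$ being handled either by the same cut argument or by passing to complements of the already-settled smaller sizes), the reduction to $QR_{13}$ via Proposition \ref{twin} closes the proof.
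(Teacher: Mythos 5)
Your reduction framework is sound and matches the paper's: complementation plus Lemma \ref{lem:edges14} does cut the problem down to size $45$ with degree sequence $(6^8,7^6)$, and the endgame via Proposition \ref{twin} and Theorem \ref{QR13} (twins $\Rightarrow$ $G-v\simeq QR_{13}$ $\Rightarrow$ $G$ is one of the two stated graphs, both of which are $3$-non-compliant by the ``if'' direction of Proposition \ref{twin}) is exactly how the paper closes. But the entire mathematical content of the theorem lives in the step you defer with ``this should rigidly constrain \dots to the point where two vertices are forced to share an identical neighborhood.'' That is an assertion, not an argument, and it is precisely the hard part: nothing in Lemmas \ref{l44}, \ref{j4}, \ref{i3} alone produces a twin pair, and one must actually eliminate every size-$45$ configuration that satisfies all the local inequalities yet has no repeated neighborhood.

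Concretely, the paper does this by splitting $V(G)$ into $A$ (the six degree-$7$ vertices) and $B$ (the eight degree-$6$ vertices) and letting $P$ be the bipartite part between them; degree counting pins $|P|\in\{24,26,28,30\}$ and $\delta(A)\ge 2$, $\Delta(B)\le 3$. It then enumerates all admissible graphs $A$ ($15+11+5+2$ structures across the four values of $|P|$) and, for each, counts pairs of $A$-vertices that must share a $B$-neighbor (lower bounds from Lemmas \ref{j4} and \ref{i3}, upper bounds from the constraint that each $B$-vertex meets at least three $A$-vertices), reaching a contradiction in every case except $|P|=30$ with $A\simeq C_6$ --- and only there does the counting force two $B$-vertices adjacent to all of $A$, i.e.\ the twin pair. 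So most cases terminate in ``$G$ is $3$-compliant,'' not in ``twins exist''; your plan of organizing around a single degree-$7$ vertex and its $6$-element non-neighborhood could perhaps be made to work, but as written it neither enumerates the configurations nor rules any of them out, so the proof has a genuine gap where its core should be.
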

\begin{proof} Since a graph is 3-non-compliant if an only if its complement is 3-non-compliant, by Lemma \ref{lem:edges14} it suffices to consider simple graphs of order 14 and size 45.
Let's assume $|G|=45$. Since $\delta^\ast(G)=6$, it follows that the degree sequence of $G$ is $(6,6,6,6,6,6,6,6,7,7,7,7,7,7)$. Let $A$ denote the subgraph of $G$ induced by the vertices of degree 7, let $B$ denote the subgraph of $G$ induced by the vertices of degree 6, and let $P$ denote the bipartite graph obtained by deleting all the edges of $A$ and all the edges of $B$ from $G$. By Lemma \ref{l39}, as every vertex of degree 6 must connect to at least three vertices of degree 7, it follows that $\Delta(B)\le 3$, and $|B|\le 12$. By Lemma \ref{l44}, it follows that $\delta(A)\ge 2$, and thus $|A|\ge 6$. 

\[42=\sum_{v\in A}deg_G(v)=2|A|+|P|\ge 12+|P|\Rightarrow |P|\le 30.\]
\[48=\sum_{v\in B}deg_G(v)=2|B|+|P|\le 24+|P|\Rightarrow |P|\ge 24.\] Either relation shows that $|P|$ is even, and thus $|P|\in \{24, 26, 28, 30\}$.\\

If $|P|=24$, then $|A|=9$, $2\le \delta(A)\le 3$, $|B|=12$, and every vertex of degree 6 is adjacent to exactly three vertices of degree 7. There are 15 non-isomorphic structures possible for $A$, as seen in Figure \ref{p24}.

\begin{figure}[htpb!]
\begin{center}
\begin{picture}(370, 170)
\put(0,0){\includegraphics[width=5in]{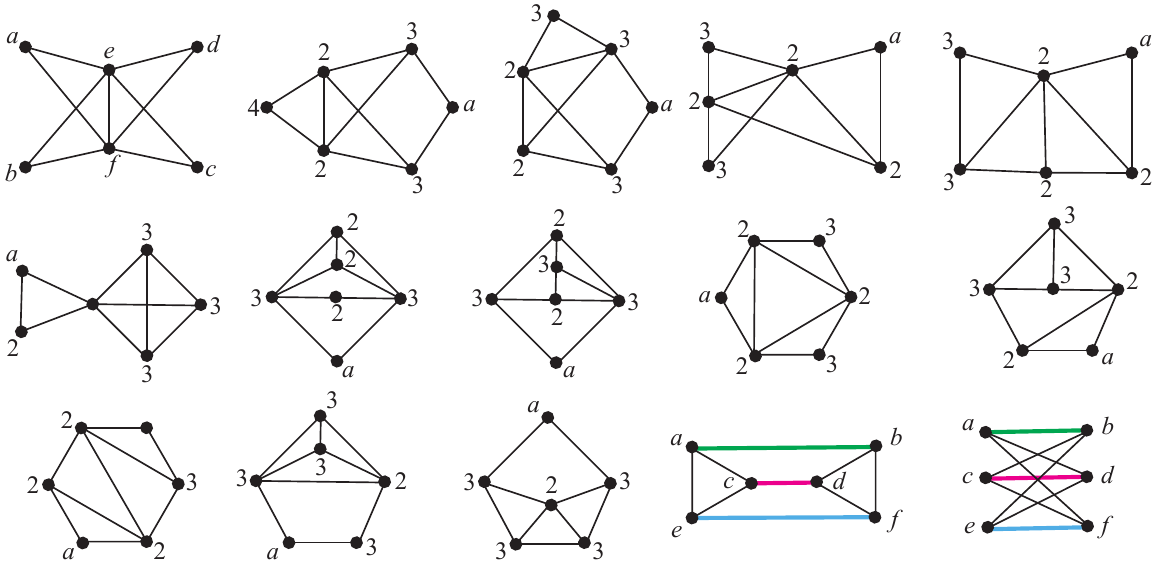}}
\end{picture}
\caption{\small The 15 graphs of order 6, size 9, and minimum degree at least 2.} 
\label{p24}
\end{center}
\end{figure}

 If $\delta(A)=3$, then $A$ is 3-regular and is therefore isomorphic to either the complete bipartite graph $K_{3,3}$, or the triangular prism (the last two graphs in Figure \ref{p24}). In either case, the edges $(a,b)$, $(c,d)$, and $(e,f)$ are non-triangular in $A$, with both their endpoints vertices of degree 7. By Lemma \ref{i3}, the endpoints of each of these edges must share at least three neighbors among the vertices of $B$. As $B$ has order 8, the pigeonhole principle implies there is a vertex of degree 6 which is adjacent to at least 4 vertices of $A$, a contradiction. 

If $\delta(A)=2$, then is A is among the first 13 graphs in Figure \ref{p24}. Any such vertex must be adjacent to 5 vertices of degree 6. Since every vertex of degree 6 neighbors exactly 3 vertices of degree 7, any such vertex is part of at most $5 \cdot 2=10$ pairs of vertices of $A$ which share a neighbor in $B$. On the other hand, without exception, there is always a vertex of degree 2, labeled $a$ for all 13 graphs in Figure \ref{p24}, for which the computed sum of the number of such pairs surpasses 10. We provide here the computation for the first graph as an example, and we leave the rest to the reader. Note that in Figures \ref{p24}, \ref{p26}, and \ref{p28}, the numerical values represent lower bounds for the common $B$-neighbors those vertices have with $a$.

By Lemma \ref{i3}, $a$ and $e$ have at least three common neighbors. Since they only share one neighbor in $A$, they must have at least two common neighbors among the vertices of $B$. The same argument works for $a$ and $f$.  By Lemma \ref{j4}, $a$ and $b$ share at least 4 neighbors, with at least two of them among the vertices of $B$. However, if $a$ and $b$ have exactly two common neighbors among the vertices of $B$, then $\{a,e,b\}$ is a connected dominating set of $G$, so $G$ is 3-compliant. This argument shows that $a$ and $b$ share at least three neighbors among the vertices of $B$. The same must be true for each of the pairs $\{a,c\}$ and $\{a,d\}$. It follows that $a$ must be in at least $2+2+3+3+3=13 >10$ pairs of vertices of $A$ with a shared vertex in $B$, a contradiction. 

So, if $|P|=24$, $G$ must be 3-compliant.\\

If $|P|=26$, then $|A|=8$, $\delta(A)=2$, $|B|=11$, and every vertex of degree 6 is adjacent to at least three vertices of degree 7. There are 11 non-isomorphic structures possible for $A$, depicted in Figure \ref{p26}.

\begin{figure}[htpb!]
\begin{center}
\begin{picture}(360, 170)
\put(0,0){\includegraphics[width=4.8in]{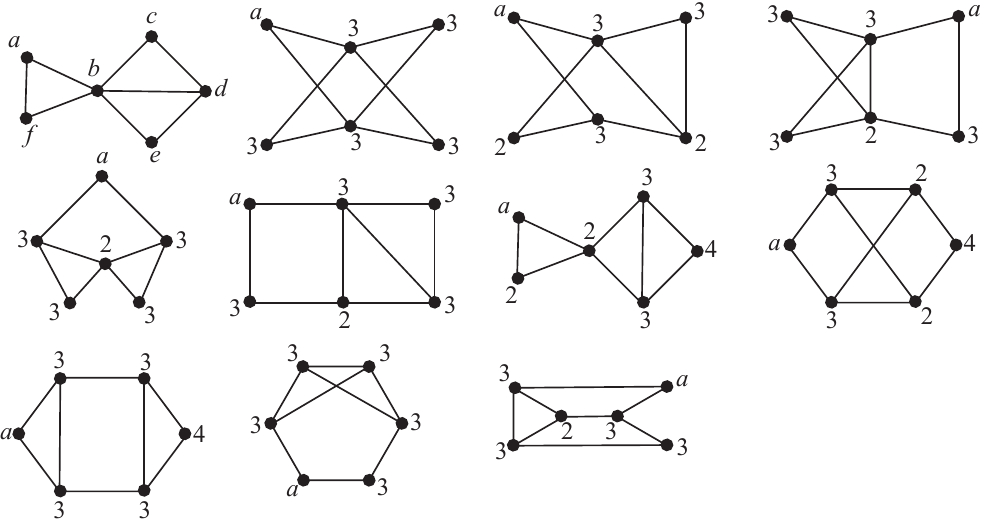}}
\end{picture}
\caption{\small The 11 graphs of order 6, size 8, and minimum degree at least 2.} 
\label{p26}
\end{center}
\end{figure}

Every vertex $v$ of $A$ with $deg_A(v)=q\ge 2$ is adjacent to $7-q$ vertices of $B$. Since every vertex of $B$ is adjacent to at least three vertices of $A$, it follows that $v$ can be in  at most $2\cdot (7-q)+26-24=16-2q$ pairs of vertices of $A$ which share a neighbor in $B$. In particular, a vertex of degree 2 of $A$ is in at most 12 pairs of vertices of $A$ which share a neighbor in $B$. However, for each of the graphs in Figure \ref{p26}, by Lemmas \ref{j4} and \ref{i3}, there exists a vertex of degree 2, labeled $a$, for which the number of this kind of pairs surpases 12. We provide the computations for the first graph in the list, and leave the rest to the reader. We note that for the second graph in Figure \ref{p26}, the argument is identical to the case covered when $|P|=24$.
By Lemma \ref{i3}, $a$ and $b$ have at least three common neighbors. Since they share a neighbor in $A$, they must share at least two neighbors in $B$. The same argument holds for the pair $a$ and $f$. Each of the pairs $\{a,c\}$, $\{a,d\}$, and $\{a,e\}$ must share at least 4 neighbors in $G$, by Lemma \ref{j4}. However, each of them only has one common neighbor in $A$, giving them each at least three common neighbors in $B$. The number of pairs of vertices of $A$ containing $a$ with a shared common neighbor in $B$ is at least $2+2+3+3+3=13>12$.

So, if $|P|=26$, $G$ must be 3-compliant.\\

If $|P|=28$, then $|A|=7$, $\delta(A)=2$, $|B|=10$, and every vertex of degree 6 is adjacent to at least three vertices of degree 7. There are 5 non-isomorphic structures possible for $A$, depicted in Figure \ref{p28}.

\begin{figure}[htpb!]
\begin{center}
\begin{picture}(300, 120)
\put(0,0){\includegraphics[width=4in]{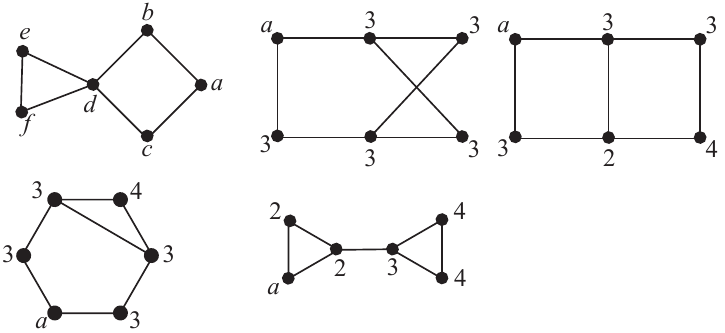}}
\end{picture}
\caption{\small The 5 graphs of order 6, size 7, and minimum degree at least 2.} 
\label{p28}
\end{center}
\end{figure}

Every vertex $v$ of $A$ with $deg_A(v)=q\ge 2$ is adjacent to $7-q$ vertices of $B$. Since every vertex of $B$ is adjacent to at least three vertices of $A$, it follows that $v$ can be in  at most $2\cdot (7-q)+28-24=18-2q$ pairs of vertices of $A$ which share a neighbor in $B$. In particular, every graph in Figure \ref{p28} has a vertex of degree 2 , labeled $a$, which is allowed to be in at most 14 pairs of vertices of $A$ which share a neighbor in $B$. As before, for each of the graphs, a direct computation will show that the number of actual pairs must surpass this threshold in order for Lemmas \ref{j4} and \ref{i3} to hold. We provide the computations for the first graph in the list, and leave the rest to the reader. 

By Lemma \ref{i3}, each pair $\{a,b\}$ and $\{a,c\}$ must have at least 3 neighbors in $B$, since neither shares neighbors in $A$. By Lemma \ref{j4}, $a$ and $d$ must have at least 4 common neighbors, at least 2 of which must be in $B$. By Lemma \ref{j4}, $\{a,e\}$ and $\{a,f\}$ must have at least 4 neighbors in $B$, since neither shares neighbors in $A$. The total number of pairs for $a$ is at least $3+3+2+4+4=16>14$, thus a contradiction.

So, if $|P|=28$, $G$ must be 3-compliant.\\

If $|P|=30$, then $|A|=6$, A is 2-regular, $|B|=9$, and every vertex of degree 6 is adjacent to at least three vertices of degree 7. There are only two structure possible for $A$, $K_3\sqcup K_3$ and $C_6$, as in Figure \ref{p30}.
\begin{figure}[htpb!]
\begin{center}
\begin{picture}(210, 45)
\put(0,0){\includegraphics[width=3in]{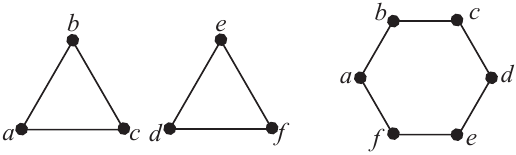}}
\end{picture}
\caption{\small The 2-regular graphs of order 6.} 
\label{p30}
\end{center}
\end{figure}

Since $A$ is two regular, each vertex of $A$ is adjacent to exactly 5 vertices of $B$. As each vertex of $B$ is adjacent to at least three vertices of $A$, each vertex of $A$ can be in at most $2\dot 5+30-24=16$ pairs of vertices of $A$ which share a neighbor in $B$. 

If $A\simeq K_3\sqcup K_3$, then by Lemma \ref{i3}, each pair $\{a,b\}$ and $\{a,c\}$ must have at least 2 neighbors in $B$, since they each share exactly 1 neighbor in $A$. By Lemma \ref{j4}, each of the pairs$\{a,d\}$, $\{a,e\}$, and $\{a,f\}$ must share at least 4 neighbors in $B$, since they share no neighbors in $A$. As $A$ is vertex-transitive, it follows that each vertex of $A$ must be in at least $2+2+4+4+4=16$ pairs of $A$ with a shared vertex in $B$. This can only happen if exactly two vertices of $B$ are adjacent to all the vertices of $A$, end each of the remaining vertices of $B$ is adjacent to exactly 3 vertices of $A$.
Let $v$ be a vertex of $B$ which is adjacent to all vertices of $A$. If $a$ and $b$ share only two neighbors in $B$, then, as a set, $\{a,b\}$ is incident to every vertex of B, and thus $\{a,v,b\}$ is a connected dominating set and $G$ is 3-compliant. Else, $a$ and $b$ share at least three neighbors in $B$ and thus $a$ must be part of at least $3+2+4+4+4=17>16$ pairs of vertices of $A$ with a shared neighbor in B, a contradiction.

If $A\simeq C_6$, then by Lemma \ref{i3}, each pair $\{a,b\}$ and $\{a,f\}$ must have at least 3 neighbors in $B$, since share no neighbors in $A$. By Lemma \ref{j4}, each of the pairs$\{a,c\}$ and $\{a,e\}$ must share at least 3 neighbors in $B$, since they share exactly 1 neighbor in $A$. By Lemma \ref{j4}, $a$ and $d$ must have at least 4 common neighbors in $B$, as they have no shared vertices in $A$. Since $A$ is vertex-transitive, it follows that each vertex of $A$ must be in at least $3+3+3+3+4=16$ pairs of $A$ with a shared vertex in $B$. This can only happen if exactly two vertices of $B$ are adjacent to all the vertices of $A$, end each of the remaining vertices of $B$ is adjacent to exactly 3 vertices of $A$. Then $G$ has a pair of twin vertices $u$ and $v$ of degree 6, adjacent to all the vertices of $A$. By Proposition \ref{twin}, $G-v$ is a 3-non-compliant graph of order 13. But Theorem \ref{QR13} shows that $G-v\simeq QR_{13}$, thus $G$ is obtained by adding a twin vertex which connects to the open neighborhood of a vertex in $QR_{13}$. Theorem \ref{QR13} guarantees that $G$ is 3-noncompliant, and so is its complement, $\overline{G}$, which is the graph obtained by adding a twin vertex which connects to the closed neighborhood of a vertex in $QR_{13}$.

\end{proof}

Note that the only two non-compliant graphs of order 14 have the $QR_{13}$-graph as an induced subgraph. We also found fourteen 3-non-compliant graphs of order 15 (seven pairs) which all have a subgraph isomorphic to $QR_{13}$. We provide their edge lists in Appendix 1 \cite{LPP}. On the other hand, $QR_{17}$-graph is 3-non-compliant, yet it does not have any subgraph isomorphic to $QR_{13}$. However, $QR_{13}$ is a minor of $QR_{17}$. We would like to pose the following questions:\\

\noindent{\textbf{Question 2}: What is the complete list of 3-non-compliant graphs of order 15?\\
\noindent{\textbf{Question 3}: Is there an example of a 3-non-compliant graph which does not have $QR_{13}$ as a minor?

\section{Three-non-compliant graphs of order 15}

\begin{lemma}
    Let $G$ be a graph with $15$ vertices and $u\in V(G)$ with $\deg_G(u)=6$. If $G$ is 3-non-compliant, then
    $$\sum_{v\in N(u)}\deg_G{v}\ge 42.$$
    \label{lem:sumdeg42}
\end{lemma}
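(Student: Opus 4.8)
The plan is to mirror the counting argument used for Lemma \ref{l39}, adjusting only the cardinalities that change when the order grows from $14$ to $15$. First I would set $N=N_G(u)$, let $H$ denote the subgraph induced by the non-neighbors of $u$ (that is, the vertices in $V(G)\setminus(V(N)\cup\{u\})$), and let $L$ denote the bipartite subgraph whose edges have one endpoint in $N$ and one in $H$. Since $\deg_G(u)=6$ we have $|V(N)|=6$, and since $|V(G)|=15$ it follows that $H$ has $15-1-6=8$ vertices.

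The two key inputs are Lemmas \ref{j3} and \ref{i2}. By Lemma \ref{j3}, every non-neighbor of $u$ shares at least three common neighbors with $u$, and all such common neighbors necessarily lie in $N$; hence each of the $8$ vertices of $H$ contributes at least three edges to $L$, giving $|L|\ge 8\cdot 3=24$. By Lemma \ref{i2}, each neighbor of $u$ shares at least two common neighbors with $u$, which again must lie in $N$, so $\deg_N(v)\ge 2$ for every $v\in N$, and the handshaking lemma yields $2|N|=\sum_{v\in N}\deg_N(v)\ge 12$.

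Finally I would decompose the degree in $G$ of each vertex of $N$ into its single edge to $u$, its edges inside $N$, and its edges to $H$, obtaining
$$\sum_{v\in N}\deg_G(v)=6+2|N|+|L|\ge 6+12+24=42.$$
The computation is essentially routine once the vertex count of $H$ is pinned down; the only point requiring care is that the common neighbors guaranteed by Lemmas \ref{j3} and \ref{i2} all reside in $N$, so that they are correctly tallied by $|L|$ and by the internal degrees of $N$ respectively, with no double counting. I expect no genuine obstacle here: the statement is precisely the order-$15$ analog of Lemma \ref{l39}, and the bound strengthens from $39$ to $42$ exactly because $H$ acquires one additional vertex, each such vertex being forced to send at least three edges into $N$.
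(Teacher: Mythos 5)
Your proof is correct and is essentially identical to the paper's: the same decomposition of $\sum_{v\in N}\deg_G(v)$ into the six edges to $u$, twice the edge count of $G[N]$, and $|L|$, with the bounds $|L|\ge 24$ and $\delta(G[N])\ge 2$ coming from Lemmas \ref{j3} and \ref{i2} exactly as in the paper. No discrepancies to report.
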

\begin{proof}
Let $N=N_G(u)$, $H=G-(N\cup u)$, and $L$ be the bipartite graph whose edges have one endpoint in $N$ and one endpoint in $H$.
By Lemma \ref{j3}, each vertex of $H$ shares at least three neighbors with $u$, thus $|L|\ge 24.$
By Lemma \ref{i2}, $u$ shares at least two neighbors with each of of its neighbors, thus 
$\deg_N({v})\ge 2,$ for each $v\in V(N)$ and $|N|\ge 6$.
We have $$\sum_{v\in V(N)}\deg_G(v) = 6 + 2|N| +|L|\ge 6 + 12 + 24 = 42. $$

\end{proof}

\begin{lemma} For $n=15$, graphs $G$ with $m=45, 46, 47$ or  $48$ edges are 3-compliant. 
\label{lem:edges15}
\end{lemma}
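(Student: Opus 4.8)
The plan is to argue by contradiction: assume $G$ is a 3-non-compliant graph of order 15 with $m \in \{45,46,47,48\}$ edges, and extract a contradiction from Lemma \ref{lem:sumdeg42}. The first step is to pin down the degree sequence. Since $G$ is 3-non-compliant, Corollary \ref{deg6} (via Theorem \ref{th-karami}) gives $\delta^*(G) \geq 6$, so $\delta(G) \geq 6$ and $\delta(\overline{G}) \geq 6$; the latter forces $\Delta(G) \leq 14 - 6 = 8$. Hence every degree lies in $\{6,7,8\}$. Writing $n_6, n_7, n_8$ for the number of vertices of each degree, the total excess $E := n_7 + 2n_8 = 2m - 90$ satisfies $E \leq 6$ for $m \leq 48$, and since every vertex of degree at least $7$ contributes at least $1$ to $E$, we get $n_7 + n_8 \leq E \leq 6$, so $n_6 \geq 9 \geq 1$.

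The engine is Lemma \ref{lem:sumdeg42}: for each degree-6 vertex $u$, 3-non-compliance gives $\sum_{v \in N(u)} \deg_G(v) \geq 42$. Since $u$ has exactly six neighbors, subtracting $36$ rewrites this in terms of excess as $\sum_{v \in N(u)}(\deg_G(v) - 6) \geq 6$. Next I would sum this over all $n_6$ degree-6 vertices and switch the order of summation. Letting $d_6(v)$ denote the number of degree-6 neighbors of $v$, the double count gives
\[
6 n_6 \leq \sum_{u\,:\,\deg_G(u)=6}\ \sum_{v \in N(u)} \bigl(\deg_G(v) - 6\bigr) = \sum_{v \in V(G)} \bigl(\deg_G(v) - 6\bigr)\, d_6(v).
\]
Only vertices of degree $7$ and $8$ contribute to the right-hand side, and the bound $d_6(v) \leq \deg_G(v)$ caps the contribution at $1 \cdot 7$ per degree-7 vertex and $2 \cdot 8$ per degree-8 vertex, so the sum is at most $7 n_7 + 16 n_8$.

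Combining these, 3-non-compliance would force $6 n_6 \leq 7 n_7 + 16 n_8$. Substituting $n_6 = 15 - n_7 - n_8$ turns this into $90 \leq 13 n_7 + 22 n_8 = 13E - 4 n_8 \leq 13 E$, and since $E \leq 6$ the right side is at most $78 < 90$, a contradiction. Thus no such $G$ can be 3-non-compliant, which settles all four values of $m$ at once.

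I expect the one genuinely delicate point to be the case $m = 48$ (total excess $E = 6$), where the per-vertex bound of Lemma \ref{lem:sumdeg42} is exactly tight: a single degree-6 vertex can in principle attain the threshold $42$ (for instance, against a degree sequence $(6^{12},8^3)$ a degree-6 vertex adjacent to all three degree-8 vertices realizes the sum $42$), so a purely local, one-vertex argument does not close the case. The global double-counting above is precisely what breaks the tie, since the high-degree vertices do not carry enough incident edges for \emph{every} degree-6 vertex to be simultaneously saturated with high-degree neighbors. If desired, the same obstruction could instead be dispatched case-by-case over the admissible degree sequences for each $m$ (mirroring the proof of Lemma \ref{lem:edges14}), but the aggregated inequality $6 n_6 \leq 7 n_7 + 16 n_8$ handles all of them uniformly.
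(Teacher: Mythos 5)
Your proof is correct, and it rests on the same engine as the paper's: Lemma \ref{lem:sumdeg42} applied to degree-$6$ vertices, after observing that $\delta^*(G)\ge 6$ confines all degrees to $\{6,7,8\}$. The only real difference is in how the tight case $m=48$ is closed. The paper keeps the argument local: it finds a single degree-$6$ vertex that fails to neighbor every vertex of degree $7$ or $8$ (a pigeonhole step, since each high-degree vertex has at most $8$ neighbors while there are at least nine degree-$6$ vertices), and that one witness already violates the bound of Lemma \ref{lem:sumdeg42}. You instead sum the inequality $\sum_{v\in N(u)}(\deg_G(v)-6)\ge 6$ over all degree-$6$ vertices $u$ and double count, arriving at $90\le 13n_7+22n_8\le 13E\le 78$, which handles all four edge counts uniformly and makes the failure quantitative rather than existential. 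Both arguments are complete; yours trades the short ad hoc pigeonhole for a single aggregated inequality, which is arguably cleaner and would scale more readily if one wanted to push the edge-count threshold further.
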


\begin{proof}
For $m=45, 46, 47$, consider a vertex $u\in V(G)$ with $\deg(u)=6$. The sum of degrees for the neighbors of $u$ is strictly less than 42. By Lemma \ref{lem:sumdeg42}, the graph $G$ is 3-compliant.
\\
For $m=48$, independent of its degree sequence, the graph $G$ has at least nine vertices of degree 6. 
There is at least one vertex $u$ with $\deg(u)=6$ such that $u$ does not neighbor all vertices of degree 7 or 8. Then the sum of degrees for the neighbors of $u$ is strictly less than 42. By Lemma \ref{lem:sumdeg42}, the graph $G$ is 3-compliant.
\end{proof}

\begin{lemma}  Let $G$ be a graph with $15$ vertices and  $v\in V(G)$ with $\deg_G(v)=6$. If 
    $$\sum_{u\in N(v)}\deg_G{u}\le 45,$$
    then at least one of the following three statements is true: $G$ is 3-compliant; $G$ has a $K_7$ minor; $\overline{G}$ has a $K_7$ minor.
\label{l45}
\end{lemma}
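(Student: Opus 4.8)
The plan is to analyze the neighborhood structure of the degree-6 vertex $v$ under the hypothesis $\sum_{u\in N(v)}\deg_G u \le 45$, and show that this constrains $G$ enough to force one of the three conclusions. Following the pattern established in Lemmas~\ref{l39} and \ref{lem:sumdeg42}, I would set $N=N_G(v)$, let $H=G-(N\cup\{v\})$ (so $|V(H)|=8$), and let $L$ be the bipartite graph of edges between $N$ and $H$. The baseline count from Lemma~\ref{lem:sumdeg42} gives $\sum_{u\in N}\deg_G u = 6+2|N|+|L|\ge 42$, with $|N|\ge 6$ coming from Lemma~\ref{i2} and $|L|\ge 24$ coming from Lemma~\ref{j3} (each of the $8$ vertices of $H$ shares at least three neighbors with $v$, all lying in $N$). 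The additional hypothesis $\sum\le 45$ then squeezes the three slack terms $2|N|-12$, $|L|-24$, and the edges internal to $N$ into a total of at most $3$, so $N$ is nearly a $6$-cycle or near-$2$-regular, $|L|$ is close to $24$, and there is very little freedom.

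Next I would exploit this rigidity to locate a large clique minor or a connected dominating set. The key observation is that when $|L|$ is forced to its minimum, nearly every vertex of $H$ is adjacent to \emph{exactly} three vertices of $N$, and $N$ itself is $2$-regular (a $C_6$ or $K_3\sqcup K_3$), just as in the $|P|=30$ case of Theorem~\ref{order14}. The strategy is to use Lemmas~\ref{j3} and \ref{i2} to build up many forced common neighbors between vertices of $N$, and then argue by a double-counting or pigeonhole argument (counting pairs of $N$-vertices sharing a common neighbor in $H$) that either the counts are inconsistent --- in which case some connected triple dominates, yielding $3$-compliance --- or the forced adjacencies inside $N\cup\{v\}$, together with contractions of the $H$-vertices that bridge them, produce seven mutually adjacent branch sets, i.e.\ a $K_7$ minor. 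Since $v$ has degree $6$, its closed neighborhood $N[v]$ is itself a natural candidate for six of the seven branch sets, with a seventh branch set contracted out of $H$; symmetrically, if the complementary counts tighten instead, the same construction in $\overline{G}$ gives a $K_7$ minor there.

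The main obstacle will be the case analysis needed to show that whenever neither a $K_7$ minor nor its complementary version arises, a connected $3$-dominating set must exist. Concretely, the delicate step is ruling out the configurations where the common-neighbor counts guaranteed by Lemmas~\ref{j3}, \ref{i2} (and their order-14 strengthenings, if they transfer) are \emph{just barely} satisfiable without creating a contractible bridge between two branch sets: in those borderline configurations one must exhibit an explicit connected triple $\{a,b,c\}$ with $a,b\in N[v]$ whose closed neighborhood covers $V(G)$, forcing $\gamma_c(G)\le 3$. I expect this to require treating the two shapes of $N$ ($C_6$ versus $K_3\sqcup K_3$) separately and, within each, subdividing according to how the eight $H$-vertices distribute their three edges across $N$, mirroring but extending the pair-counting bookkeeping already carried out for $|P|=30$ in Theorem~\ref{order14}. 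The slightly larger $H$ (eight vertices instead of nine) and the freedom afforded by the extra unit of slack $(\sum\le 45$ rather than $=42)$ are what make a clean contradiction unavailable and necessitate the three-way disjunction in the statement.
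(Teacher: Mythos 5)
Your setup matches the paper's exactly: the same decomposition into $N$, $H$ (of order $8$), and the bipartite $L$, the same use of Lemmas~\ref{j3} and \ref{i2} to force $|L|\ge 24$ and $\delta(G[N])\ge 2$, and the same pivotal mechanism --- every pair of vertices of $N$ must have a common neighbor in $H$ (in $\overline{G}$), on pain of that pair together with $v$ being a connected dominating triple, and these common neighbors are then contracted to build a $K_7$ minor. So the strategy is the right one. But as written the proposal is a plan rather than a proof, and the plan has two concrete defects. First, your case analysis only contemplates $G[N]\cong C_6$ or $K_3\sqcup K_3$. You correctly compute that the total slack is $3$, yet $2(\mathrm{size}(N)-6)+(|L|-24)\le 3$ allows $G[N]$ to have \emph{seven} edges, which adds five more configurations (two triangles plus an edge, a $6$-cycle with a chord of length $2$ or $3$, a $4$-cycle and a triangle sharing a vertex, and a $5$-cycle with an attached vertex). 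These five cases must be treated; in the paper four of them are killed by exhibiting an explicit connected dominating triple and one reduces to the two-triangles case. Restricting to the $2$-regular shapes leaves most of the required analysis undone.

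Second, the location of the $K_7$ minor matters and your description of it is off. You propose building the seven branch sets on $N[v]$ inside $G$ (and you call this ``six of the seven branch sets,'' though $N[v]$ already has seven vertices). In $G$, the set $N$ spans only $6$ or $7$ of its $15$ possible edges, so one would have to manufacture $8$ or $9$ missing adjacencies out of only $8$ vertices of $H$, and nothing forces this. The construction that actually works lives in $\overline{G}$: there $N$ is missing only the $6$ or $7$ edges of $G[N]$, the vertex $v$ is adjacent to \emph{all} of $H$, and contracting a common $\overline{G}$-neighbor $v_i\in H$ of a pair of $N$-vertices into one of them creates the missing $N$--$N$ edge and the missing $v$--$N$ edge in one stroke; six such contractions (with distinct $v_i$, whose distinctness is exactly what the ``no vertex of $H$ covers two pairs'' domination argument guarantees) yield $K_7$ on $\{a,\dots,f,v\}$ in $\overline{G}$. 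To complete your argument you would need to commit to this complement-side construction and then carry out the seven-case verification.
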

\begin{proof}

Let   $v\in V(G)$ with $\deg_G(v)=6.$
Let $N=N_G(v)$, $H=G-(N\cup v)$, and $L$ be the bipartite graph whose edges have one endpoint in $N$ and one endpoint in $H$.
We denote by $v_1, v_2, \ldots, v_8$ the vertices of $H$.

Assume $G$ is 3-non-compliant. 
By Lemmas \ref{j3} and \ref{i2},  $v$ shares three neighbors with each vertex of $H$ and two neighbors which each vertex of $N$.
This means  $deg_L(u)\ge 3$ for all $u\in H$, and $deg_N(u)\ge 2$ for each $u\in N$. This means $|L|\ge 24$ and $|N|\ge 6$, and 
since 
  $$ \sum_{u\in N(v)}\deg_G{u} = 6 + 2|N| + |L| \le 45,$$
$|N|=6$ or $|N|=7.$
With these restrictions, $N$ can be one of the seven graphs in Figure \ref{7Ns}.
The respective complements of these seven graphs can be seen in Figure \ref{7Nsc}.

\begin{figure}[htpb!]
\begin{center}
\begin{picture}(330, 150)
\put(0,0){\includegraphics[width=5in]{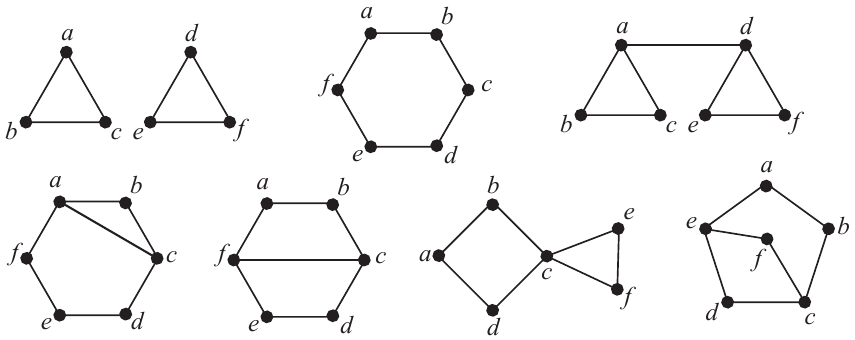}}
\end{picture}
\caption{\small Graphs of order 6,  size 6 or 7, and minimal degree 2.} 
\label{7Ns}
\end{center}
\end{figure}

\begin{figure}[htpb!]
\begin{center}
\begin{picture}(380, 140)
\put(0,0){\includegraphics[width=5.5in]{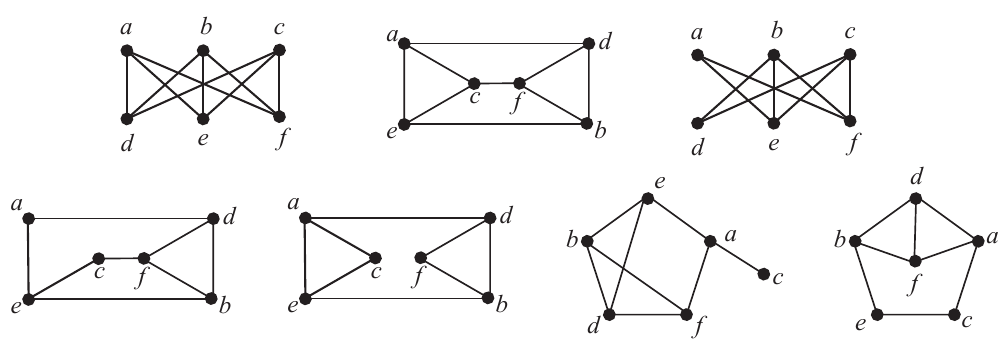}}
\end{picture}
\caption{\small Complements of the seven graphs of order 6,  size 6 or 7, and minimal degree 2.} 
\label{7Nsc}
\end{center}
\end{figure}

In each of seven cases, we show that under the assumption that $G$ is 3-non-compliant, $G$ or $\overline{G}$ has a $K_7$ minor. 
We primarily work in the complement $\overline{G}$. We note that regardless of $N$, in the complement $\overline{G}$, each pair of vertices of $N$ have a common neighbor in $\overline{H}$. 
Otherwise, this pair of vertices together with $v$ form a connected domination set of cardinality 3 in $G$, contradicting the assumption that $G$ is 3-non-compliant. 
We also note that each vertex of $\overline{H}$ neighbors at most three vertices of $cN$. In each case below we consider a graph $N$ as labeled in Figure \ref{7Ns}.\\
\textit{Case 1}. $N$ is a disjoint union of two triangles.
A vertex of $H$, say $v_1$, cannot neighbor all three vertices $a,b,c$ in $\overline{G}$. Otherwise $\{v, v_1, b\}$  dominates $\overline{G}$. 
Similarly, a vertex of $H$ cannot neighbor all vertices $d, e, f$. 
It follows that the edges $ab, bc, ac, de, df,$ and $ef$ are all covered (define covered)  by a different vertex of $\overline{H}$. 
Six edge contractions give a graph which contains a $K_7$ subgraph induced by $\{a, b, c, d, e, f, v\}$.\\
 \textit{Case 2}. $N$ is a 6-cycle.
  Each edge $ab, cd, $ and $ef $  is covered by a different vertex of $\overline{H}$. 
 Also, each edge $bc, de,$ and $af$ is covered by a different vertex of $\overline{H}$. 
 If a single vertex of $\overline{H}$, say $v_1$ covers two edges, say $ab$ and $bc$, then $\{v, v_1, b\}$  dominates $\overline{G}$, a contradiction.
 It follows that the edges $ab, cd, ef, bc, de,$ and $af$ are all covered  by a different vertex of $\overline{H}$. 
Six edge contractions give a graph which contains a $K_7$ subgraph induced by $\{a, b, c, d, e, f, v\}$.\\
\textit{Case 3}. $N$ is a union of two triangles and an edge.
As in Case 1, the edges $ab, bc, ac, de, df,$ and $ef$ are all covered by a different vertex of $\overline{H}$. 
Six edge contractions give a graph which contains a $K_7$ subgraph induced by $\{a, b, c, d, e, f, v\}$.\\
If the edge $cf$ is covered by same vertex that covers $ac$ or $bc$,  say $v_2$, then edge $cv_2$ is contracted.
If the edge $cf$ is covered by same vertex that covers $df$ or $ef$,  say $v_3$, then edge $fv_3$ is contracted.\\
\textit{Case 4}. $N$ is a 6-cycle together with a chord of length 2.
Since $\{d, e, f\}$ does not  dominate $G$, the vertices $d, e, $ and $f$ have a common neighbor in $\overline{H}$, $v_1$. 
Then $\{v, v_1, e\}$ dominate $\overline{G}$, a contradiction.\\
\textit{Case 5}. $N$ is a 6-cycle together with a chord of length 3.
Since $\{a, e, f\}$ does not dominate $G$, the vertices $a, e, $ and $f$ have a common neighbor in $\overline{H}$, $v_1$. 
Since $\{b, c, d\}$ does not  dominate $G$, the vertices $a, e, $ and $f$ have a common neighbor in $\overline{H}$, $v_2$. 
Then $\{v, v_1, v_2\}$ dominate $\overline{G}$, a contradiction.\\
\textit{Case 6}. $N$ is a 4-cycle and a 3-cycle that share a vertex.
Since $\{a, c, d\}$ does not dominate $G$, the vertices $a, c, $ and $d$ have a common neighbor in $\overline{H}$, $v_1$. 
Then $\{v, v_1, d\}$ dominate $\overline{G}$, a contradiction.\\
\textit{Case 7}. $N$ is a 5-cycle together with a vertex that neighbors two non-adjacent vertices of the 5-cycle.
Since $\{a, e, f\}$ does not dominate $G$, the vertices $a, e, $ and $f$ have a common neighbor in $\overline{H}$, $v_1$. 
Since $\{b, c, d\}$ does not  dominate $G$, the vertices $a, e, $ and $f$ have a common neighbor in $\overline{H}$, $v_2$. 
Then $\{v, v_1, v_2\}$ dominate $\overline{G}$, a contradiction.

\end{proof}

\begin{lemma}  Let $G$ be a graph with $15$ vertices and $m=49, 50, 51,$ or $52$ edges. Then there exists $H\in \{G, \overline{G}\}$ and $v\in V(H)$ such that 
   $$\sum_{u\in N_H(v)}\deg_H{u}\le 45.$$
\label{l46}
\end{lemma}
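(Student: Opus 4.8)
The plan is to argue by contradiction: assume that for every vertex $v$ one has both $\sum_{u\in N_G(v)}\deg_G(u)\ge 46$ and $\sum_{u\in N_{\overline G}(v)}\deg_{\overline G}(u)\ge 46$, and derive a contradiction. The first step is a crude bound. For any vertex, $\sum_{u\in N_G(v)}\deg_G(u)\le \deg_G(v)\cdot\Delta(G)$, so evaluating at a minimum-degree vertex of $G$ gives $\delta(G)\,\Delta(G)\ge 46$, and evaluating at a minimum-degree vertex of $\overline G$ gives $(14-\Delta(G))(14-\delta(G))\ge 46$, since $\delta(\overline G)=14-\Delta(G)$ and $\Delta(\overline G)=14-\delta(G)$. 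Because $m\le 52$, the average degree of $G$ is below $7$, so $\delta(G)\le 6$. A short case check over $\delta(G)\in\{0,\dots,6\}$ eliminates all but one value: for instance $\delta(G)\le 3$ already violates $\delta(G)\Delta(G)\ge 46$, while $\delta(G)=5$ forces $\Delta(G)\ge 10$ and hence $(14-\Delta(G))(14-\delta(G))\le 4\cdot 9<46$. Only $\delta(G)=6,\ \Delta(G)=8$ survives, so under the contradiction hypothesis every degree of $G$ (and of $\overline G$) lies in $\{6,7,8\}$.

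Next I would extract two local neighborhood constraints. Write $a,b,c$ for the numbers of vertices of degree $6,7,8$, so $a+b+c=15$, and note $a\ge1$, $c\ge1$ because $\delta(G)=6$ and $\Delta(G)=8$. For a degree-$6$ vertex $v$, its six neighbor degrees sum to at least $46$ out of a maximum of $48$, whence $\sum_{u\in N_G(v)}(8-\deg_G(u))\le 2$; therefore at least four neighbors of $v$ have degree $8$. Dually, a degree-$8$ vertex $w$ of $G$ is a degree-$6$ vertex of $\overline G$, and the identical estimate applied in $\overline G$ (where $N_{\overline G}(w)$ is the set of six $G$-non-neighbors of $w$, of $\overline G$-degree $14-\deg_G(\cdot)$) shows that at least four of the six non-neighbors of $w$ in $G$ have degree $6$.

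The finish is a double count of the number $e_{68}$ of edges joining degree-$6$ to degree-$8$ vertices. Summing ``at least four degree-$8$ neighbors'' over the $a$ degree-$6$ vertices gives $e_{68}\ge 4a$. On the other hand, each degree-$8$ vertex has at least four degree-$6$ non-neighbors, hence at most $a-4$ degree-$6$ neighbors (which already forces $a\ge 4$, and symmetrically $c\ge 4$), so $e_{68}\le c(a-4)$. Combining yields $4a\le c(a-4)$, i.e. $(a-4)(c-4)\ge 16$ with both factors nonnegative. But $a+c\le 15$ gives $(a-4)+(c-4)\le 7$, so by the arithmetic–geometric mean inequality $(a-4)(c-4)\le (7/2)^2<16$, a contradiction. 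Hence the contradiction hypothesis fails, and some vertex of $G$ or $\overline G$ has neighbor-degree sum at most $45$.

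I expect the main obstacle to be the first step—pinning the degree sequence down to exactly $\{6,7,8\}$ with $\delta(G)=6$ and $\Delta(G)=8$—together with the key observation that the two ``four-out-of-six'' local conditions are precisely what converts a global edge count into the impossible inequality $(a-4)(c-4)\ge 16$ under the budget $a+c\le 15$. Once the degree range is fixed, everything is elementary double counting; the specific hypothesis $49\le m\le 52$ enters only through $\delta(G)\le 6$ and through guaranteeing $c\ge1$, so that the degree-$8$ vertices whose non-neighborhoods drive the count actually exist.
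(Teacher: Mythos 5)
Your proof is correct, and its engine is the same as the paper's: under the contradiction hypothesis every degree-$6$ vertex of $G$ has at least four degree-$8$ neighbours, every degree-$6$ vertex of $\overline{G}$ (i.e.\ degree-$8$ vertex of $G$) has at least four degree-$6$ non-neighbours, and double counting over the $ac$ pairs formed by a degree-$6$ and a degree-$8$ vertex gives precisely the paper's inequality $4a+4c\le ac$. You diverge in two places, both worth keeping. First, the paper writes $a+b+c=15$ from the outset, silently assuming every degree lies in $\{6,7,8\}$; that is true in the intended application (to $3$-non-compliant graphs, where $\delta^*(G)\ge 6$) but is not a hypothesis of the lemma as stated. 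Your preliminary step --- $\delta(G)\Delta(G)\ge 46$ together with $\bigl(14-\Delta(G)\bigr)\bigl(14-\delta(G)\bigr)\ge 46$ and $\delta(G)\le 6$ from the edge count --- derives $\delta(G)=6$, $\Delta(G)=8$ from the contradiction hypothesis itself, so your argument proves the literal statement. Second, the paper finishes by substituting the linear relation between $a$, $c$, and $m$ into $4a+4c\le ac$ and checking the resulting quadratic separately for $m=49,\dots,52$ (a computation whose write-up appears to swap the roles of $a$ and $c$ at one point); your reformulation as $(a-4)(c-4)\ge 16$ with both factors nonnegative, against $(a-4)+(c-4)\le 7$, kills all four cases in one line. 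As you note, your argument uses only $m\le 52$, never $m\ge 49$, so combined with passing to the complement it actually proves the conclusion for every $15$-vertex graph; the restriction to $49\le m\le 52$ in the statement merely reflects that smaller $m$ is handled earlier in the paper by Lemma \ref{lem:edges15}.
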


\begin{proof}
Let $a$, $b$ and $c$ represent the number of vertices of degree 6, 7, and 8, respectively in $G$.
Then  $a$, $b$ and $c$ represent the number of vertices of degree 8, 7, and 6, respectively in $\overline{G}$.
Assume by contradiction that for both $G$ and $\overline{G}$, for all vertices of degree 6, the sum of the degrees of their neighbors is at least 46.
Then each vertex of degree 6 must neighbor at least four vertices of degree 8. 
This gives  $4a+4c\le ac,$ where $ac$ represents the number of edges of a complete bipartite graph $K_{a,c}.$
We also have $a+b+c= 15$ and $6a+7b+8c= 2m$. The last two equalities give $c=105-2m+a.$ We let $k=2m-97\in \{1, 3,5,7\}$ and substitute $c=a-k+8$  in the inequality $4a+4c\le ac,$   to obtain 
$$a^2-ka+4k-32\ge 0.$$
 This inequality implies that 
 $$a \ge \frac{k+\sqrt{(k-8)^2+64}}{2}.$$
  For $m=49,50,51$, and 52, the inequality implies $a \ge 6, 6, 7$ and $8$, respectively. 
 On the other hand,  for $m=49, 50, 51$, and 52, the graph $G$ can have at most 4, 5, 6, and 7 vertices of degree 8, respectively. This is when $G$ has no vertices of degree 7.
 We get a contradiction in each case.
\end{proof}

\begin{theorem}
If $G$ is  graph with 15 vertices,   then at least one of the following three statements is true: $G$ is 3-compliant; $G$ has a $K_7$ minor; $\overline{G}$ has a $K_7$ minor.
\label{oneofthree}
\end{theorem}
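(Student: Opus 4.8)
The plan is to establish the trichotomy by a complete case analysis on the size $m$ of $G$. Since the property ``$G$ is 3-compliant, or $G$ has a $K_7$ minor, or $\overline{G}$ has a $K_7$ minor'' is symmetric under complementation (a graph of order 15 and size $m$ has complement of size $\binom{15}{2}-m = 105-m$), I may assume without loss of generality that $m \le 52$. The key observation is that the preceding lemmas have been engineered precisely to cover this range: the goal is to show that whenever $G$ is \emph{not} 3-compliant, one of the two $K_7$ minors must appear.

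First I would dispose of the small-size cases. If $m \le 44$, then $2m \le 88$, so by the handshaking lemma the average degree is below $6$, forcing $\delta^*(G) < 6$; by Corollary \ref{deg6} such a graph cannot be 3-non-compliant, hence $G$ is 3-compliant. Next, by Lemma \ref{lem:edges15}, every graph of order 15 with $m \in \{45,46,47,48\}$ is 3-compliant. This settles all sizes up to $48$, leaving only $m \in \{49,50,51,52\}$ to handle.

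For the remaining range $49 \le m \le 52$, the strategy is to chain together the last two lemmas. By Lemma \ref{l46}, for any graph $G$ of order 15 with $m \in \{49,50,51,52\}$ there exists a choice $H \in \{G, \overline{G}\}$ and a vertex $v \in V(H)$ with $\deg_H(v)=6$ and $\sum_{u\in N_H(v)}\deg_H(u) \le 45$. This is exactly the hypothesis of Lemma \ref{l45}. Applying Lemma \ref{l45} to $H$ yields that $H$ is 3-compliant, or $H$ has a $K_7$ minor, or $\overline{H}$ has a $K_7$ minor. Since 3-compliance of $H$ is equivalent to 3-compliance of $G$ (as $G$ and $\overline{G}$ are 3-compliant simultaneously), and since $\{H,\overline{H}\}=\{G,\overline{G}\}$, each of these three outcomes for $H$ translates directly into the corresponding outcome for $G$, completing this case.

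The cases $m \ge 53$ follow immediately by passing to the complement: if $m \ge 53$ then $105-m \le 52$, so $\overline{G}$ falls under one of the cases already treated, and applying the result to $\overline{G}$ gives the trichotomy for $G$ by the complementation symmetry. The main obstacle in this argument is not the bookkeeping but ensuring that Lemma \ref{l46} genuinely supplies a qualifying vertex for \emph{every} size in $\{49,\dots,52\}$; that is the substantive content already established, and so the proof of this theorem reduces to assembling the lemmas in the correct order and invoking the complementation symmetry to fold the large-size cases onto the small ones.
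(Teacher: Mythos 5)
Your proposal is correct and is exactly the argument the paper intends (the paper in fact omits the proof, leaving it as the assembly of Lemma \ref{lem:edges15} for $45\le m\le 48$, Lemma \ref{l46} to produce a degree-6 vertex with neighborhood degree-sum at most 45 for $49\le m\le 52$, and Lemma \ref{l45} to conclude, together with the complementation symmetry and the $\delta^*\ge 6$ observation for small sizes). No gaps.
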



\section{$K_{15}$ is not bi-nIK}

The properties of being nIL or nIK are \textit{hereditary}, which means that any minor of a graph which is nIL (nIK), is itself nIL (nIK). Considering complements, any graph which has an IL minor is itself IL, and any graph which has and IK minor is itself IK. Conway and Gordon \cite{CG} proved that $K_6$ is IL, and that $K_7$ is IK. The combined work of Conway and Gordon \cite{CG}, Sachs \cite{Sa}, and Robertson, Seymour and Thomas \cite{RST} fully characterize IL graphs: a graph is IL if and only if it contains a graph in the Petersen family as a minor.
The Petersen family consists of seven graphs obtained from $K_6$ by $\nabla Y-$moves and $Y\nabla-$moves, as presented in Figure \ref{fig-ty}.

\begin{figure}[htpb!]
\begin{center}
\begin{picture}(160, 50)
\put(0,0){\includegraphics[width=2.4in]{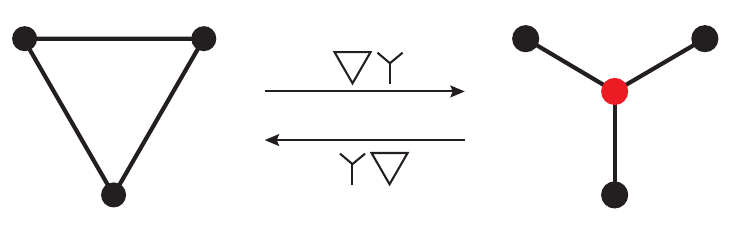}}
\end{picture}
\caption{\small  $\nabla Y-$ and $Y\nabla-$moves} 
\label{fig-ty}
\end{center}
\end{figure}

In the absence of a known IK minor, it is relatively difficult to prove a given graph is IK. However, under certain linking conditions on the graph, it follows that the graph is IK. The $D_4$ graph is the (multi)graph shown in Figure~\ref{fig-D4}.
A \dfn{double-linked $D_4$} is a $D_4$ graph embedded in $S^3$
such that each pair of opposite 2-cycles ($C_1 \cup C_3$, and $C_2 \cup C_4$) has odd linking number.
The following lemma was proved by Foisy \cite{Fo2};
a more general version of it was proved independently 
by Taniyama and Yasuhara \cite{TY}.

\begin{figure}[htpb!]
\begin{center}
\begin{picture}(150, 130)
\put(0,0){\includegraphics[width=1.8in]{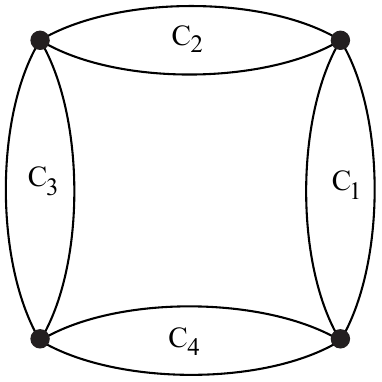}}
\end{picture}
\caption{The $D_4$ graph.} 
\label{fig-D4}
\end{center}
\end{figure}

\begin{lemma}
\label{D4-Lemma}
Every double-linked $D_4$ contains a nontrivial knot.
\end{lemma}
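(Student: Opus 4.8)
The proof rests on the second coefficient $a_2$ of the Conway polynomial, equivalently the Arf invariant mod $2$: since $a_2$ of the unknot is $0$, any knot with $a_2 \neq 0$ is nontrivial, so it suffices to locate a cycle of the embedded $D_4$ whose $a_2$ is odd. The plan is to work with the Hamiltonian cycles of $D_4$. Writing $e_i^0, e_i^1$ for the two parallel edges of the $2$-cycle $C_i$, each choice $\epsilon = (\epsilon_1,\epsilon_2,\epsilon_3,\epsilon_4) \in \{0,1\}^4$ of one edge from each $C_i$ determines a Hamiltonian cycle $\gamma_\epsilon$, giving a family of $16$ knots. The four $2$-cycles fall into the two disjoint opposite pairs $(C_1, C_3)$ and $(C_2, C_4)$ named in the statement, and these are the only pairs of vertex-disjoint cycles of $D_4$, so they carry all of the available linking information.

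The key step is to establish the mod-$2$ identity
\[ \sum_{\epsilon \in \{0,1\}^4} a_2(\gamma_\epsilon) \equiv \mathrm{lk}(C_1, C_3)\cdot \mathrm{lk}(C_2, C_4) \pmod 2. \]
Note that this is dimensionally the only possibility: $a_2$ is a degree-two invariant, and the sole degree-two quantity built from the available linking numbers is the product $\mathrm{lk}(C_1,C_3)\,\mathrm{lk}(C_2,C_4)$, whereas a sum of linking numbers, being degree one, could not appear. To prove the identity I would use the skein relation $a_2(K_+) - a_2(K_-) = \mathrm{lk}(K_0)$, where $K_0$ is the oriented smoothing at the switched crossing, and argue in two stages, following the method of Foisy and of Taniyama--Yasuhara. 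First, show that the left-hand sum is unchanged mod $2$ under any crossing change of the embedded $D_4$ that preserves the parities of $\mathrm{lk}(C_1,C_3)$ and $\mathrm{lk}(C_2,C_4)$: each such crossing change alters $a_2(\gamma_\epsilon)$ by a linking number for exactly an even number of the indices $\epsilon$, so the contributions cancel in the sum. Second, reduce the embedding by crossing changes to a standard model in which $C_1 \cup C_3$ and $C_2 \cup C_4$ are standard clasped pairs realizing the two linking numbers and everything else is unknotted and unlinked, and compute $\sum_\epsilon a_2(\gamma_\epsilon)$ directly there, where only the crossings coming from the two independent clasps survive and their pairing produces the product.

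Granting the identity, the conclusion is immediate: a double-linked $D_4$ has $\mathrm{lk}(C_1,C_3)$ and $\mathrm{lk}(C_2,C_4)$ both odd, so the right-hand side equals $1$, the sum on the left is odd, and therefore at least one Hamiltonian cycle $\gamma_\epsilon$ has $a_2(\gamma_\epsilon) \neq 0$. That cycle is a nontrivial knot contained in the embedding, as required.

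I expect the main obstacle to be the first stage of the identity: the bookkeeping that the mod-$2$ sum over the $16$ Hamiltonian cycles is invariant under crossing changes preserving the two linking parities, and hence depends only on $\mathrm{lk}(C_1,C_3)$ and $\mathrm{lk}(C_2,C_4)$. This is precisely where one must track how a single crossing change of two strands of $D_4$ propagates through the edge-selection $\epsilon$ and verify that it touches an even number of the $\gamma_\epsilon$ (or changes them by cancelling amounts), so that only the cross-terms between the two clasps survive to give the product. This is the content of the Taniyama--Yasuhara formula, and the cleanest route may well be to invoke their general relation directly rather than to rederive it by hand.
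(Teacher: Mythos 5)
Your outline is correct and is essentially the argument of Foisy \cite{Fo2} (and the special case of the Taniyama--Yasuhara relation \cite{TY}) that the paper itself relies on: the paper states this lemma with a citation rather than a proof, and the cited proof is exactly your mod-$2$ identity $\sum_{\epsilon} a_2(\gamma_\epsilon) \equiv \mathrm{lk}(C_1,C_3)\,\mathrm{lk}(C_2,C_4) \pmod 2$ over the $16$ Hamiltonian cycles, established via the skein relation for the second Conway coefficient. The only caveat is that your first stage (invariance of the sum under crossing changes preserving the two linking parities) is left as a sketch, but as you note it is precisely the content of the cited references and can be invoked directly.
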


A graph which contains a double-linked D4-minor in every embedding is called \textit{intrinsically D4} (\dfn{ID4}). Lemma \ref{D4-Lemma}
remains the gold standard in showing a graph is IK, since any intrinsically D4 graph is intrinsically linked. For this reason, Miller and Naimi \cite{MN} developed 
an algorithm, implemented as a Mathematica program, to determine whether a graph is intrinsically D4 .

We used their program and other Mathematica code developed with Ramin Naimi \cite{N} to prove the following theorem.
\begin{theorem} Let $G$ be a nIL graph of order $12$. Then $\overline{G}$ is intrinsically knotted (IK).
\label{12IK}
\end{theorem}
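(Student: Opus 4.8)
The plan is to combine the structural classification of order-12 nIL graphs with the sufficient condition for intrinsic knottedness provided by Lemma~\ref{D4-Lemma}. The statement asserts that for \emph{every} nIL graph $G$ of order $12$, the complement $\overline{G}$ is IK. Since intrinsic knottedness is hereditary under the minor relation (a graph with an IK minor is IK), it suffices to exhibit, for each such $\overline{G}$, either a known IK minor or a certificate that $\overline{G}$ is intrinsically $D_4$. The first step is to reduce from the infinite-looking family of nIL graphs of order $12$ to a finite, explicit list: I would restrict attention to the \textbf{maximal} nIL graphs of order $12$. Because nIL is hereditary, if every maximal nIL graph of order $12$ has an IK complement, the same does \emph{not} immediately follow for subgraphs, so the reduction must go the other way --- adding edges to $G$ removes edges from $\overline{G}$. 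Thus the correct reduction is: if $G$ is nIL, then $G$ is a subgraph of some maximal nIL graph $G_{\max}$ of order $12$, and $\overline{G} \supseteq \overline{G_{\max}}$. Since IK is preserved under adding edges (a supergraph of an IK graph is IK), it suffices to prove $\overline{G_{\max}}$ is IK for each of the maximal nIL graphs $G_{\max}$.

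The second step is to enumerate the maximal nIL graphs of order $12$. As announced in the abstract and introduction, there are exactly $6503$ such graphs, and their classification is one of the paper's computational contributions. I would treat this enumeration as input: produce (or import) the complete list of these $6503$ graphs together with their edge lists, for instance by a computer search over order-$12$ graphs testing maximality with respect to the linkless-embeddability property (using the forbidden Petersen-family minor characterization of Robertson--Seymour--Thomas to decide nIL).

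The third and central step is to verify, for the complement of each of the $6503$ maximal graphs, that it is intrinsically knotted. Here I would run the Miller--Naimi Mathematica program \cite{MN}, which tests whether a graph is intrinsically $D_4$, on each complement $\overline{G_{\max}}$; by Lemma~\ref{D4-Lemma} an affirmative answer certifies IK. For complements that the ID4 test does not immediately settle, I would fall back on directly searching for a minor among known IK graphs (e.g.\ $K_7$, or members of the Heawood/$K_{3,3,1,1}$ families of minor-minimal IK graphs), again invoking heredity. The output is a finite table of certificates, one per graph.

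The main obstacle I expect is not conceptual but computational and verificational: the ID4 test is expensive (it examines many embeddings and linking-number conditions), and running it across $6503$ graphs, some of which may be borderline cases requiring the $K_7$-minor fallback, demands careful bookkeeping and independent confirmation that every single complement receives a valid IK certificate. A subtler point is ensuring the maximal-graph list is genuinely complete and correctly generated, since the whole argument reduces to finitely many cases and a single missed maximal graph would leave a gap; I would therefore cross-check the count of $6503$ and the IL-testing routine against the independently developed code of \cite{N} before declaring the verification complete.
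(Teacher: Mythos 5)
Your proposal is correct and follows essentially the same route as the paper: reduce to the $6503$ maximal nIL graphs of order $12$ (noting that $\overline{G}\supseteq\overline{G_{\max}}$ and IK is preserved under adding edges), then certify each complement as intrinsically $D_4$ via the Miller--Naimi program and Lemma~\ref{D4-Lemma}. The paper in fact reports that the ID4 test succeeded on all $6503$ complements, so your $K_7$-minor fallback was not needed.
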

\begin{proof}
There are a total of 6503 maximal linklessly embeddable (\dfn{maxnIL}) graphs of order 12 \cite{P}. The search was organized following the following observations:

\begin{itemize}
\item If $\delta(G)$ is the minimal degree of a maxnIL graph $G$, then $2\le \delta(G)$. It easy straightforward to check that any nIL graph with vertices of degree 0 or 1 is a proper subgraph of another nIL graph of the same order.
\item If $G$ is a maxnIL graph of order 12, then $\delta(G)\le 5$.  This follows from \cite{OP}, where the second author and his student proved that every graph of order 12 with minimal degree at least 6 has a $K_6$-minor, and it is therefore intrinsically linked.
\item When deleting a vertex from a maxnIL graph of order 12, one obtains a nIL graph of order 11. 
\end{itemize}

The previous observations show that any maxnIL graph of order 12 is an edge-deletion subgraph of a graph obtained by adding a vertex of degree at most 5 to a maxniIL graph of order 11. By the work of the last two authors, Ryan Odeneal, and Ramin Naimi \cite{NOPP}, any maxnIL graph which has a vertex of degree 2 or 3, can be obtained from a maxnIL of order one less by performing a clique sum with a $K_3$ over a $K_2$ induced subgraph, or a clique sum with a $K_4$ over an induced $K_3$-subgraph. We took the list of 710 maxnIL graphs of order 11, obtained the graphs of order 12 through the clique sum operation, and sifted out the 2273 maxnIL graphs with minimal degree at most 3. The remaining 2230 were found by adding a vertex of degree 4 or 5, respectively, checked whether any nIL graphs were obtained, and which one were maximal. Then we took the ones that were intrinsically linked, used McKay and Piperno's nauty program \cite{MPip} to obtain all their 1-edge deletion subgraphs up to isomorphism, selected those which has minimum degree at least 4, and then sorted them into IL or nIL. We repeated the process until no graphs with minimal degree 4, respectively 5 were left. 

We then used Miller and Naimi's program \cite{MN} and verified that all the complements of the 6503 maxnIL graphs of order 12 were intrinsically D4. Since any nIL graph of order 12 can be completed, by adding edges, to a maxnIL of order 12, it follows that the complement of a nIL graph of order 12 contains the complement of a maxnIL graph of order 12, and it is therefore intrinsically knotted.
\end{proof}

\begin{observation} Let $G$ be an IL graph. Then $G \ast K_1$ is IK.
\label{coneoverIL}
\end{observation}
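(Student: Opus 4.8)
The plan is to reduce the statement to a finite check on the seven graphs of the Petersen family, exploiting two facts already recorded in the excerpt: that a graph is IL if and only if it has a minor in the Petersen family, and that intrinsic knottedness propagates upward, i.e.\ a graph with an IK minor is itself IK.

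The key observation I would establish first is that coning commutes with the minor operation away from the apex: if $H$ is a minor of a graph $G$, then $H \ast K_1$ is a minor of $G \ast K_1$. The deletions and contractions witnessing that $H$ is a minor of $G$ involve only edges and vertices of $G$; performing the identical sequence inside $G \ast K_1$ while never deleting the apex $w$ nor any edge incident to $w$ leaves $w$ joined to every surviving vertex (a contraction of $uv$ merely merges the two apex edges $uw$ and $vw$ into one), so the resulting minor is exactly $H \ast K_1$. Applying this with $G$ an IL graph, choose a Petersen-family graph $P$ that is a minor of $G$; then $P \ast K_1$ is a minor of $G \ast K_1$. Since a graph with an IK minor is IK, it suffices to prove that $P \ast K_1$ is IK for each of the seven Petersen graphs $P$.

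For the base case $P = K_6$ we have $K_6 \ast K_1 = K_7$, which is IK by Conway and Gordon \cite{CG}. For the remaining six graphs I would use that the whole Petersen family is generated from $K_6$ by $\nabla Y$-moves (Figure \ref{fig-ty}) together with the known fact that $\nabla Y$-moves preserve intrinsic knottedness: if $P'$ is obtained from $P$ by a $\nabla Y$-move on a triangle $T$, then $T$ is still a triangle in $P \ast K_1$, and performing the same move there produces $(P' \ast K_1) \setminus wt$, the graph in which the new degree-three vertex $t$ is joined to the three vertices of $T$ but not to the apex. This is a subgraph of $P' \ast K_1$, it is IK by induction (the move preserves IK and $P \ast K_1$ is IK), and hence $P' \ast K_1$ is IK as well. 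Alternatively, and more in keeping with the computational tools already deployed for Theorem \ref{12IK}, the six nontrivial cones $P \ast K_1$ could be fed directly to the Miller--Naimi program \cite{MN} to certify that each is intrinsically $D_4$; by Lemma \ref{D4-Lemma} each is then IK.

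Either route settles all seven cones and completes the argument. The conceptual content is the minor reduction, which is routine; the one delicate point to get right is the interaction of coning with a $\nabla Y$-move — namely that applying the move inside $G \ast K_1$ yields only the subgraph $(P' \ast K_1)\setminus wt$, missing the apex-to-new-vertex edge. This is harmless precisely because IK only needs to propagate to supergraphs, but it is the step I expect to require the most care to state correctly. In the computational route the analogous ``main obstacle'' is simply the in-house verification that the six remaining Petersen cones are intrinsically $D_4$.
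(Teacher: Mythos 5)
Your overall strategy coincides with the paper's: the paper's one-line proof (``the statement holds for all the members of the Petersen family'') implicitly relies on exactly your reduction --- an IL graph has a Petersen-family minor, coning preserves minors away from the apex, and IK propagates from minors --- so the content is entirely in checking the seven cones. Your minor-coning lemma and the observation that a $\nabla Y$-move performed inside $P \ast K_1$ yields only the subgraph $(P'\ast K_1)\setminus wt$ are both correct and carefully stated.

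There is, however, a genuine gap in your primary route for the seven cones: the Petersen family is \emph{not} generated from $K_6$ by $\nabla Y$-moves alone. Six of the seven members are so reachable, but $K_{3,3,1}$ is not --- the unique graph obtained from $K_6$ by one $\nabla Y$-move has a degree-$3$ vertex, while $K_{3,3,1}$ has minimum degree $4$, and $K_{3,3,1}$ enters the family only via a $Y\nabla$-move (as the caption of Figure \ref{fig-ty} indicates, both move types are needed). Since $Y\nabla$-moves do not transfer intrinsic knottedness in the direction your induction requires, the case $P = K_{3,3,1}$, i.e.\ the cone $K_{3,3,1}\ast K_1 = K_{3,3,1,1}$, is left uncovered. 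This is not a removable technicality: the intrinsic knottedness of $K_{3,3,1,1}$ is precisely the main theorem of Foisy \cite{Fo2}, proved via the double-linked $D_4$ argument of Lemma \ref{D4-Lemma}, and must be invoked (or reproved) separately. Your alternative computational route --- certifying each of the six nontrivial cones intrinsically $D_4$ with the Miller--Naimi program --- would close the gap, as would simply citing \cite{Fo2} for $K_{3,3,1,1}$ and keeping the $\nabla Y$ induction for the remaining five non-complete members.
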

\begin{proof}
Since the statement holds for all the members of the Petersen family, the result follows.
\end{proof}

\begin{theorem}
Let $G$ be a simple graph on 15 vertices and let $\overline{G}$ denote its complement. 
If  $G$  is nIK, then $\overline{G}$ is IK.
\label{15binik}
\end{theorem}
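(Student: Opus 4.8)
The plan is to run the structural trichotomy of Theorem~\ref{oneofthree} against the knotting inputs already in hand, using the hypothesis that $G$ is nIK to kill the branches that would otherwise go the wrong way. First I would apply Theorem~\ref{oneofthree} to $G$: either $\overline{G}$ has a $K_7$ minor, or $G$ has a $K_7$ minor, or $G$ is $3$-compliant. In the first branch $\overline{G}$ contains the IK graph $K_7$ (Conway--Gordon) as a minor, so $\overline{G}$ is IK and we are finished. The second branch would make $G$ itself IK, contradicting the hypothesis, so it never occurs. Hence everything reduces to the $3$-compliant case.

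For the $3$-compliant case I would translate compliance into connected domination via Theorem~\ref{theo:compliance}, which gives $\gamma_c(G)\le 3$ or $\gamma_c(\overline{G})\le 3$. Let $\Gamma$ denote whichever of $G$, $\overline{G}$ carries a connected dominating set $S$ with $|S|\le 3$ (if $\Gamma$ happened to be disconnected the corresponding domination number is only smaller, and the argument below is unaffected). Contracting the connected set $S$ to a single vertex $w$ produces a minor of $\Gamma$ in which $w$ is adjacent to all of $V\setminus S$, since $S$ dominates. The point I would stress is that the neighborhood $N(w)=V\setminus S$ spans exactly the \emph{induced} subgraph $\Gamma[V\setminus S]$, so the minor contains the cone $\Gamma[V\setminus S]\ast K_1$; moreover $|V\setminus S|\ge 12$, so I may fix a $12$-element subset $T\subseteq V\setminus S$.

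I would then split on whether $\Gamma[V\setminus S]$ is IL. If it is, Observation~\ref{coneoverIL} makes the cone $\Gamma[V\setminus S]\ast K_1$ IK, so $\Gamma$ has an IK minor and is IK. If it is not, then the induced subgraph $\Gamma[T]$ is nIL by heredity, and since $|T|=12$ Theorem~\ref{12IK} shows $\overline{\Gamma[T]}$ is IK; as complementation commutes with passing to induced subgraphs, $\overline{\Gamma[T]}=\overline{\Gamma}[T]$ sits inside $\overline{\Gamma}$, so $\overline{\Gamma}$ is IK. This yields the clean dichotomy that any $15$-vertex graph with $\gamma_c\le 3$ has itself or its complement IK. Feeding in $\Gamma=G$ when $\gamma_c(G)\le 3$, or $\Gamma=\overline{G}$ when $\gamma_c(\overline{G})\le 3$, and invoking that $G$ is nIK to discard the ``$G$ is IK'' alternative, forces $\overline{G}$ to be IK in both cases.

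The hard part, conceptually, will be the bridge in the $3$-compliant case rather than any single estimate: I must phrase compliance through connected \emph{domination} so that the coned-over vertex set is a genuine induced subgraph. Only then does complementing it and applying Theorem~\ref{12IK} land on the correct graph; with an arbitrary minor merely realizing $\Delta\ge 12$ the neighborhood need not be induced and the complement step collapses. The rest is routine bookkeeping---verifying the IL/nIL split is exhaustive, that nIL descends to the chosen $12$ vertices, and that both instantiations terminate in ``$\overline{G}$ is IK'' once the nIK hypothesis removes the other option.
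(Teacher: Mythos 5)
Your proposal is correct and follows essentially the same route as the paper: the $3$-non-compliant case is dispatched by Theorem~\ref{oneofthree} and the Conway--Gordon $K_7$ result, while the $3$-compliant case contracts a connected dominating set of size at most $3$ to produce a cone over an induced subgraph on at least $12$ vertices, then applies Observation~\ref{coneoverIL} and Theorem~\ref{12IK} exactly as the paper does. Your explicit remark that the coned-over vertex set must be \emph{induced} (so that complementation lands inside $\overline{G}$) is a point the paper leaves implicit, but it is the same argument.
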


\begin{proof}
If $G$ is 3-compliant, then either $G$ or $\overline{G}$ has a minor with a vertex of degree 12.  We assume this minor has order 13.
If  $G$ has a minor $H$ with a vertex $v$ of degree 12, since $G$ is nIK, by Theorem \ref{coneoverIL}, $H-v$ is nIL. 
By Theorem \ref{12IK}, $c(H-v)$ is IK, thus $\overline{G}$ is IK.
If $\overline{G}$ has a minor $H$ with a vertex $v$ of degree 12 and $\overline{G}$ is nIK,  $H-v$ is nIL. 
By Theorem \ref{12IK}, $c(H-v)$ is IK, thus $G$ is IK.

If $G$ is 3-non-compliant, by Theorem \ref{oneofthree}, at least one of $G$ or $\overline{G}$ has a $K_7$ minor. Since $K_7$ is IK, 
one of $G$ or $\overline{G}$ is intrinsically knotted. 
\end{proof}
Note that the previous result proves that $K_n$ is not bi-nIK, for all $n\ge 15$. On the other hand $K_{12}$ is bi-nIK, as the following example will demonstrate. The graph depicted on the left of Figure \ref{sc12} is self-complementary (isomorphic to its complement). It is also 2-apex, as the deletion of the "empty" vertices yields the planar graph on the right. As 2-apex graphs are knotlessly embeddable by results of Blain et al. \cite{BBFHL} and Ozawa and Tsutsumi, \cite{OT}, the graph in Figure \ref{sc12} is nIK. It follows $K_{12}$ is bi-nIK.

\begin{figure}[htpb!]
\begin{center}
\begin{picture}(230, 110)
\put(0,0){\includegraphics[width=3.5in]{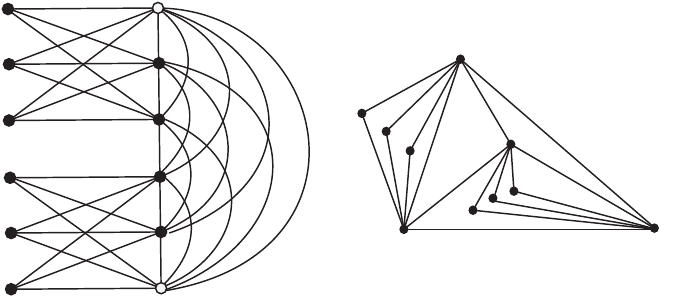}}
\end{picture}
\caption{A self-complementary, 2-apex graph of order 12.} 
\label{sc12}
\end{center}
\end{figure}

The following is a natural question:\\

\noindent{\textbf{Question 4}: Are either of $K_{13}$ or $K_{14}$ bi-nIK?\\

Note that in the arguments leading to the proof of Theorem \ref{oneofthree} and Theorem \ref{15binik}, we focused mainly on finding a $K_7$ minor in either the graph or its complement. Since not having a $K_7$($K_6$) minor is also a hereditary property, we pose the following questions:\\

\noindent{\textbf{Question 5}: What is the smallest integer $15\ge n>12$, such that $K_n$ is not bi-$K_6$-free?\\

\noindent{\textbf{Question 6}: What is the smallest integer $18 \ge n>13$, such that $K_n$ is not bi-$K_7$-free?\\

The upper bounds in the previous two questions are derived from a theorem of Mader \cite{Ma}, which implies that any graph of order $n$ with at least $4n-9$ edges has a $K_6$ minor, and any graph of order $n$ with at least $5n-14$ edges has a $K_7$ minor. The lower bounds stem from the example in Figure \ref{12k6free}. This order 12 graph has no $K_6$ minor, and neither does its complement.

\begin{figure}[htpb!]
\begin{center}
\begin{picture}(180, 125)
\put(0,0){\includegraphics[width=2.5in]{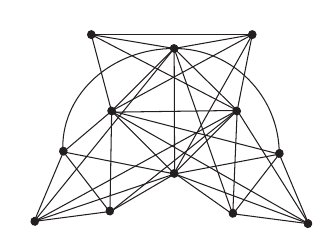}}
\end{picture}
\caption{A $K_6$-free graph, with its complement also $K_6$-free.} 
\label{12k6free}
\end{center}
\end{figure}


\section{k-Non-Compliant Graphs}

\begin{theorem}
\label{theo:compliance}
A graph $G$ or its complement $\overline{G}$ has a minor $H$ where $\Delta(H) \geq |G| - k = n-k$ if and only if $\min(\gamma_c(G),\gamma_c(\overline{G})) \leq k$.
\end{theorem}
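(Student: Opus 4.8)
The plan is to reformulate the ``minor with a high-degree vertex'' condition purely in terms of connected vertex sets and their neighborhoods, and then match these against connected dominating sets. For a vertex set $C\subseteq V(G)$ write $N_G(C)=\{x\in V(G)\setminus C : x \text{ is adjacent to some vertex of } C\}$, and set $W_G(C)=V(G)\setminus(C\cup N_G(C))$ for the vertices left undominated by $C$. The first thing I would record is that, since each vertex of a minor $H$ of $G$ is a connected branch set and distinct neighbors of a vertex $v\in V(H)$ meet $N_G(B_v)$ in distinct vertices, a vertex $v$ with $\deg_H(v)\ge n-k$ forces its branch set $C:=B_v$ to be connected with $|N_G(C)|\ge n-k$; conversely, any connected $C$ realizes a minor in which the contraction of $G[C]$ has degree $|N_G(C)|$. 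Thus ``$G$ has a minor $H$ with $\Delta(H)\ge n-k$'' is equivalent to ``there is a connected $C\subseteq V(G)$ with $|N_G(C)|\ge n-k$'', and since $n=|C|+|N_G(C)|+|W_G(C)|$, this is in turn equivalent to $|C|+|W_G(C)|\le k$. The whole statement is invariant under $G\leftrightarrow\overline G$, so in each direction I may assume the relevant graph is $G$ itself.

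For the direction $\min(\gamma_c(G),\gamma_c(\overline G))\le k\Rightarrow$ minor, I would assume $\gamma_c(G)\le k$ and take a minimum connected dominating set $S$. Then $G[S]$ is connected and $N_G(S)=V(G)\setminus S$, so contracting $G[S]$ to a single vertex yields a minor in which that vertex has degree $n-|S|\ge n-k$. This is the routine half.

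For the converse I would start from a connected $C$ with $|C|+|W|\le k$, where $W:=W_G(C)$, and split into cases. If $W=\emptyset$, then $C$ is itself a connected dominating set of $G$ of size $\le k$, so $\gamma_c(G)\le k$. If $W\neq\emptyset$, the key dichotomy is whether some $z\in N_G(C)$ is adjacent in $G$ to every vertex of $W$. If such a $z$ exists, then $C\cup\{z\}$ is connected in $G$ (as $z\in N_G(C)$) and dominates $V(G)$ (it dominates $N_G(C)$ through $C$ and $W$ through $z$), of size $|C|+1\le|C|+|W|\le k$, so again $\gamma_c(G)\le k$. If no such $z$ exists, then every vertex of $N_G(C)$ is non-adjacent in $G$ to some vertex of $W$, hence adjacent in $\overline G$ to some vertex of $W$; since moreover every vertex of $C$ is adjacent in $\overline G$ to every vertex of $W$, the set $C\cup W$ dominates $\overline G$, is connected in $\overline G$ (the complete bipartite pattern between the nonempty sets $C$ and $W$ is present), and has size $\le k$, giving $\gamma_c(\overline G)\le k$.

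The crux, and the step I expect to require the most care, is exactly this final dichotomy: when $C$ fails to dominate $G$, one must decide whether to repair it inside $G$ by appending a single vertex adjacent to all of $W$, or to pass to $\overline G$, where $C\cup W$ is automatically connected and dominating. The arithmetic that makes both repairs fit the budget is that $W\neq\emptyset$ supplies precisely the extra unit $|C|+1\le|C|+|W|\le k$ needed for the in-$G$ fix. I would also flag the degenerate possibility $C=\emptyset$ (which forces $n\le k$) and the standing convention that a connected dominating set exists only in a connected graph; this is automatically consistent here, since every set we exhibit is simultaneously connected and dominating in the relevant graph.
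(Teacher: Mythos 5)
Your proof is correct and follows essentially the same route as the paper's: reduce the minor condition to a connected branch set $C$ with small complement of its closed neighborhood, handle the easy direction by contracting a spanning tree of a connected dominating set, and in the hard direction split into the three cases ``$C$ dominates $G$'', ``$C$ plus one vertex dominates $G$'', and ``$C\cup W$ dominates $\overline{G}$''. The only difference is cosmetic: your dichotomy tests whether some vertex of $N_G(C)$ is adjacent to all of $W$, while the paper tests adjacency to all of $C\sqcup W$; both yield the same case analysis and the same size bounds.
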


\begin{proof}
The statement follows from the definitions if $k=1$, or $G$ is disconnected.
For the direct implication, without loss of generality, let $H$ be a minor of $G$ and $s \in V(H)$ be such that $\deg_H(s) = n-k.$ Let $S \subset V(G)$ be exactly the set of vertices contracted through edges in $G$ to produce $s \in V(H)$. 
We note that $G[S]$ is connected. 
Define $N_S = \{u \in V(G)\setminus S\:|\: \exists \: v \in S: (u,v) \in E(G)\}$.
By assumption, $|N_S| \geq n-k$. If $N_S = V(G)\setminus S,$ then $S$ is a connected dominating set of size at most $k,$ implying $\gamma_c(G) \leq k.$

If $N_S \subsetneq V(G)\setminus S,$ then $T = V(G)\setminus (S \sqcup N_S)$ is non-empty such that (in $G$) no vertex in $S$ is adjacent to a vertex in $T$. 
This gives  $\overline{G}[S\sqcup T]$ is connected. Moreover, if every vertex $u \in N_S = V(G) \setminus (S \sqcup T)$ is adjacent in the complement $\overline{G}$ to some $v \in S \sqcup T$, then $S \sqcup T$ is a connected dominating set and $\gamma_c(\overline{G}) \leq k.$

So, let there exist some $u_0 \in N_S$ such that $(u_0, v) \in E(G)$ for every $v \in S\sqcup T.$ 
Let $S': = S \sqcup \{u_0\}$.
Note that $G[S']$ is connected, and since $T$ was nonempty, we have $|S'| \leq k$. 
Because $u_0 \in S'$, we find $S'$ is a connected dominating set of $G$. We conclude, in this third case $\gamma_c(G) \leq k$.

Towards the reverse implication, without loss of generality, let $\gamma_c(G) \leq k$. Let $S \subset V(G)$ be a connected dominating set of $G$ such that $|S| \leq k$. 
A minor $H$ with $\Delta(H)\ge n-k$  is obtained from $G$ by contracting the edges of any spanning tree of the connected subgraph $G[S]$.
\end{proof}

Define $f: \N \to \N$ so that $f(n)$ is the largest number such that for all graphs $G$ of order $n$,  $G$ or $\overline{G}$ has a minor $H$ with $\Delta(H) \geq f(n)$.
The first few values  of $f(n)$ are $f(1)=1, f(2)=2, f(3)=2, f(4)=2, f(5)=3$. Note that $n - f(n)$ corresponds to the minimum value of $k$ such that all graphs $G$ of order $n$ are $k$-compliant. The construction in Proposition \ref{twin} shows that $\{n-f(n)\}$ is a non-decreasing sequence.

\begin{theorem}
\label{theo:linearbound}
For $n \geq 15,$
$$n - \left\lfloor \frac{n+1}{4}\right\rfloor \leq f(n).$$
\end{theorem}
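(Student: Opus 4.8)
The plan is to translate the statement about minors and maximum degree into one about connected domination, and then to apply the Nordhaus--Gaddum bound of Karami et al. Set $k=\lfloor (n+1)/4\rfloor$. By Theorem \ref{theo:compliance}, asserting $f(n)\ge n-k$ is exactly asserting that every graph $G$ of order $n$ satisfies $\min\{\gamma_c(G),\gamma_c(\overline{G})\}\le k$; equivalently, that every graph of order $n$ is $k$-compliant. So the whole theorem reduces to the single inequality $\min\{\gamma_c(G),\gamma_c(\overline{G})\}\le\lfloor (n+1)/4\rfloor$ for all $G$ with $n\ge 15$.

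First I would clear away the cases where Theorem \ref{th-karami} does not apply. If one of $G,\overline{G}$ is disconnected --- say $G$ --- then two vertices of $G$ in distinct components form a connected dominating set of $\overline{G}$ of size $2$, whence $\min\{\gamma_c(G),\gamma_c(\overline{G})\}\le 2$. If instead both graphs are connected but $\gamma_c(G)\le 3$ or $\gamma_c(\overline{G})\le 3$, then trivially $\min\{\gamma_c(G),\gamma_c(\overline{G})\}\le 3$. In either situation we are done, because $n\ge 15$ gives $\lfloor (n+1)/4\rfloor\ge 4$.

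It remains to treat the main case: $G$ and $\overline{G}$ both connected with $\gamma_c(G),\gamma_c(\overline{G})\ge 4$. Here two ingredients combine. First, from $\delta(\overline{G})\le\Delta(\overline{G})=n-1-\delta(G)$ one gets $\delta(G)+\delta(\overline{G})\le n-1$, hence the elementary bound $\delta^*(G)\le\lfloor (n-1)/2\rfloor$. Second, Theorem \ref{th-karami} gives $\gamma_c(G)+\gamma_c(\overline{G})\le\delta^*(G)+2$, with equality only when $\delta^*(G)=6$. I would split on this equality case. If $\delta^*(G)=6$, then $\gamma_c(G)+\gamma_c(\overline{G})\le 8$, so $\min\{\gamma_c(G),\gamma_c(\overline{G})\}\le 4\le\lfloor (n+1)/4\rfloor$. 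If $\delta^*(G)\ne 6$, the inequality is strict, giving $\gamma_c(G)+\gamma_c(\overline{G})\le\delta^*(G)+1$ and therefore
$$\min\{\gamma_c(G),\gamma_c(\overline{G})\}\le\Big\lfloor\tfrac{\delta^*(G)+1}{2}\Big\rfloor\le\Big\lfloor\tfrac{\lfloor (n-1)/2\rfloor+1}{2}\Big\rfloor=\Big\lfloor\tfrac{n+1}{4}\Big\rfloor,$$
where the last equality is the nested-floor identity $\lfloor\lfloor (n+1)/2\rfloor/2\rfloor=\lfloor (n+1)/4\rfloor$.

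The one genuinely delicate point --- and the step I expect to be the main obstacle --- is that the crude estimate $\gamma_c(G)+\gamma_c(\overline{G})\le\delta^*(G)+2$ by itself is off by one precisely when $n\equiv 1\pmod 4$; it is the sharper statement that equality forces $\delta^*(G)=6$ that rescues those residues, and it is exactly the hypothesis $n\ge 15$ that makes the $\delta^*(G)=6$ branch land at $\min\le 4\le\lfloor (n+1)/4\rfloor$. I would therefore be careful to invoke the full strength of Theorem \ref{th-karami}, including its equality characterization, rather than the inequality alone, and to verify the nested-floor arithmetic across all residues of $n$ modulo $4$.
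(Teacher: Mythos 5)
Your proof is correct and follows essentially the same route as the paper's: reduce via Theorem \ref{theo:compliance} to bounding $\min\{\gamma_c(G),\gamma_c(\overline{G})\}$, dispose of the case where this minimum is at most $3$, and in the remaining case combine $\delta^*(G)\le\left\lfloor (n-1)/2\right\rfloor$ with Theorem \ref{th-karami}, invoking the equality characterization $\delta^*(G)=6$ to obtain strictness and handling that branch separately via $\min\le 4\le\left\lfloor (n+1)/4\right\rfloor$. (One cosmetic slip in your closing commentary: the non-strict bound falls short by one when $n\equiv 1$ or $2\pmod 4$, not only when $n\equiv 1\pmod 4$; this does not affect the argument.)
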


\begin{proof}
If $G$ is 3-compliant, the inequality holds since $$n-\left\lfloor \frac{n+1}{4} \right\rfloor \le n-3\le f(n), \textrm{for all } n\ge 15.$$
Assume $G$ is 3-non-compliant.
 Then $\gamma_c(G) + \gamma_c(\overline{G}) \leq \delta^*(G) + 2$ by Theorem 5 in \cite{KSKW}.
If $\delta^*(G) \not= 6$, by Theorem 7 in \cite{KSKW}, $\gamma_c(G) + \gamma_c(\overline{G}) < \delta^*(G) + 2$.
Since  $\delta^*(G) \leq \lfloor \frac{n-1}{2} \rfloor$, then
$$\gamma_c(G) + \gamma_c(\overline{G}) < \left\lfloor \frac{n-1}{2} \right\rfloor + 2,$$ or equivalently $$ \gamma_c(G) + \gamma_c(\overline{G}) \leq \left\lfloor \frac{n-1}{2} \right\rfloor + 1.$$
It follows that  \begin{equation}
\label{eq:linfn}
    \min(\gamma_c(G), \gamma_c(\overline{G})) \leq \left\lfloor  \frac{\lfloor \frac{n-1}{2} \rfloor + 1}{2}\right \rfloor = \left\lfloor \frac{n+1}{4} \right\rfloor.
\end{equation}
By Theorem \ref{theo:compliance},
$$n - \left\lfloor \frac{n+1}{4}\right\rfloor \leq f(n).$$
If $\delta^*(G) = 6$, since $G$ is 3-non-compliant,  then $\gamma_c(G) = \gamma_c(\overline{G}) = 4$. 
By Theorem \ref{theo:compliance}, $f(n)\ge n-4$. 
For $n \geq 15$, $\left\lfloor \frac{n+1}{4} \right\rfloor \geq 4$ and it follows that $f(n) \geq n - \left\lfloor \frac{n+1}{4} \right\rfloor$.

\end{proof}

\begin{remark} Note that $n - \left\lfloor \frac{n+1}{4} \right\rfloor = n-3$, for $n=13, 14$.
So the bound $n\ge 15$ is necessary, by Theorems \ref{QR13} and \ref{order14}.
\end{remark}

Note that Theorem \ref{theo:linearbound} shows that all the graph of order $n\le 18$ are 4-compliant. On the other hand, $QR_{61}$ is 4-non-compliant. This begs the questions:\\

\noindent{\textbf{Question 5}: What is the smallest order of a 4-non-compliant graph?\\

\noindent{\textbf{Question 6}: What is the smallest order of a $k$-non-compliant graph, for $k\ge 4$?\\

\begin{theorem}[Karami et al. \cite{KSKW}]
\label{theo:productbound}
If $G$ and $\overline{G}$ are both connected of order $n \geq 7$, then 
$$\gamma_c(G) \cdot \gamma_c(\overline{G}) \leq 2n - 4,$$
with equality if and only if $G$ or $\overline{G}$ is a path or a cycle.
\end{theorem}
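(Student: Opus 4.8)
The plan is to work directly with the connected domination number via the standard identity $\gamma_c(H) = |V(H)| - \ell(H)$, where $\ell(H)$ is the maximum number of leaves over all spanning trees of a connected graph $H$: a minimum connected dominating set is exactly the set of internal vertices of a leaf-maximum spanning tree. Since every spanning tree has at least two leaves, this gives $\gamma_c(H) \le n-2$; and since a vertex of degree $\Delta(H)$ lies in a spanning tree whose $\Delta(H)$ branches each contribute a distinct leaf, we get $\ell(H) \ge \Delta(H)$ and hence $\gamma_c(H) \le n - \Delta(H)$. Applying this to $\overline{G}$ and using $\Delta(\overline{G}) = n-1-\delta(G)$ yields $\gamma_c(\overline{G}) \le \delta(G)+1$, and symmetrically $\gamma_c(G) \le \delta(\overline{G})+1$. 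Write $a = \gamma_c(G)$ and $b = \gamma_c(\overline{G})$.

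The first reduction is a dichotomy coming from diameter. I claim that for $G$ and $\overline{G}$ both connected, $b \le 2$ if and only if $\mathrm{diam}(G) \ge 3$ (equivalently, $b \ge 3$ iff $\mathrm{diam}(G) = 2$): a dominating vertex of $\overline{G}$ would be a $G$-isolated vertex, while a dominating edge $uv$ of $\overline{G}$ is precisely a pair non-adjacent in $G$ with no common $G$-neighbour, i.e.\ a pair at $G$-distance at least $3$. Symmetrically $a \le 2$ iff $\mathrm{diam}(\overline{G}) \ge 3$. The \emph{easy case} is then $\min(a,b) \le 2$, where $ab \le 2(n-2) = 2n-4$ at once. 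For the equality discussion here I would use $\ell(H)\ge\Delta(H)$ again: $\gamma_c(H) = n-2 \iff \ell(H)=2 \iff \Delta(H) \le 2 \iff H$ is a path or a cycle; and for $n \ge 7$ a path or cycle has diameter at least $3$, so by the dichotomy its complement has connected domination number exactly $2$. This pins equality $ab = 2n-4$ in the easy case to precisely $G$ or $\overline{G}$ being a path or a cycle.

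The substance is the remaining case $a,b \ge 3$, which by the dichotomy means that \emph{both} $G$ and $\overline{G}$ have diameter $2$. Here the crude degree bounds above are useless: they only give $ab \le (\delta(G)+1)(\delta(\overline{G})+1)$, quadratic in $n$, and even Theorem~\ref{th-karami} (a sum bound of order $\delta^*(G)+2 \approx n/2$) yields only $ab = O(n^2)$. So I must genuinely show that two complementary diameter-$2$ graphs cannot both have large connected domination. The route I would take is to pass to the ordinary domination number: in a diameter-$2$ graph any two vertices of a dominating set are joined by a path of length at most two, so one should be able to connect a minimum dominating set at constant extra cost, giving a bound of the form $\gamma_c(H) \le \gamma(H) + c$ for diameter-$2$ graphs. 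Combining such a connection lemma with the classical Nordhaus--Gaddum inequalities $\gamma(G)\gamma(\overline{G}) \le n$ and $\gamma(G)+\gamma(\overline{G}) \le n+1$ would bound $ab$ by roughly $2n$.

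I expect this diameter-$2$ analysis to carry essentially all of the difficulty. The obstacle is twofold: first, establishing a constant-strength connection lemma for diameter-$2$ graphs (rather than the general Duchet--Meyniel bound $\gamma_c \le 3\gamma - 2$, which is too lossy); and second, sharpening the combined estimate from the ballpark $2n$ down to the exact value $2n-4$ and proving the inequality is \emph{strict} whenever both $G$ and $\overline{G}$ have diameter $2$. The latter strictness is what cleanly separates this case from the extremal configurations, since all paths and cycles have diameter at least $3$ and therefore live entirely in the easy case; ruling out equality in the diameter-$2$ regime is precisely the step that closes the equality characterization.
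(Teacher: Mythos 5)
First, a remark on context: the paper does not prove this statement at all --- it is imported verbatim from Karami, Sheikholeslami, Khodkar, and West \cite{KSKW} --- so there is no in-paper proof to compare against; your proposal has to stand on its own. Its first half does stand: the identity $\gamma_c(H)=|V(H)|-\ell(H)$, the consequences $\gamma_c(H)\le n-2$ and $\gamma_c(H)\le n-\Delta(H)$, and the dichotomy ``$\gamma_c(\overline{G})\le 2$ iff $\mathrm{diam}(G)\ge 3$'' are all correct, and they do dispose of the case $\min(\gamma_c(G),\gamma_c(\overline{G}))\le 2$ together with the equality characterization inside that case (equality forces $\gamma_c=n-2$, hence $\Delta\le 2$, hence a path or cycle; conversely a path or cycle of order $n\ge 7$ has diameter at least $3$, so its complement has connected domination number exactly $2$).

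The genuine gap is the entire case $\gamma_c(G),\gamma_c(\overline{G})\ge 3$, i.e.\ both $G$ and $\overline{G}$ of diameter $2$, which you yourself flag as ``carrying essentially all of the difficulty'' and then do not carry out. Two concrete problems. (i) The connection lemma $\gamma_c(H)\le\gamma(H)+c$ for diameter-$2$ graphs with an absolute constant $c$ is asserted as something ``one should be able to'' prove, but no proof is given; the naive connection of a $\gamma$-set in a diameter-$2$ graph costs up to one new vertex per component, i.e.\ $\gamma_c\le 2\gamma-1$, which is multiplicative, not additive. (ii) Even granting the lemma with the best conceivable constant $c=1$, combining it with the Jaeger--Payan bounds $\gamma(G)\gamma(\overline{G})\le n$ and $\gamma(G)+\gamma(\overline{G})\le n+1$ yields only
$$\gamma_c(G)\,\gamma_c(\overline{G})\le \gamma(G)\gamma(\overline{G})+\bigl(\gamma(G)+\gamma(\overline{G})\bigr)+1\le 2n+2,$$
which overshoots $2n-4$; so the plan cannot close by ``sharpening'' alone --- it needs a different idea for the diameter-$2$ regime, and it also still owes the strict inequality there, which is what the equality characterization hinges on. As written, the proposal proves the theorem only when one of the two connected domination numbers is at most $2$.
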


\begin{corollary}
For $n \geq 7$, we have
$$\min\{\gamma_c(G), \gamma_c(\overline{G})\} \leq \left\lceil \sqrt{2n - 4}\right\rceil - 1, \textrm{ and equivalently } n - \left\lceil \sqrt{2n - 4}\right\rceil + 1 \leq f(n).$$
\end{corollary}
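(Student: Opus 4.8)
The plan is to read off the bound on $\min\{\gamma_c(G),\gamma_c(\overline{G})\}$ from Theorem \ref{theo:productbound}, handling the connected and disconnected cases separately, and then to transfer the conclusion to $f(n)$ via Theorem \ref{theo:compliance}. The reformulation in terms of $f(n)$ is immediate once the domination bound is established, so the real content is the inequality on $\min\{\gamma_c(G),\gamma_c(\overline{G})\}$.

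First I would dispose of the case in which one of $G,\overline{G}$ is disconnected, since Theorem \ref{theo:productbound} assumes both are connected. If $G$ is disconnected, I would pick vertices $u,v$ lying in two distinct components of $G$; in $\overline{G}$ they are adjacent, and together they dominate every vertex, so $\gamma_c(\overline{G})\le 2$. Hence $\min\{\gamma_c(G),\gamma_c(\overline{G})\}\le 2$, and since $\lceil\sqrt{2n-4}\rceil-1\ge 3$ for every $n\ge 7$, the claimed inequality holds trivially; the symmetric argument covers the case where $\overline{G}$ is disconnected.

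Next, assuming both $G$ and $\overline{G}$ are connected, I would apply Theorem \ref{theo:productbound}. Writing $m=\min\{\gamma_c(G),\gamma_c(\overline{G})\}$, we get $m^2\le \gamma_c(G)\cdot\gamma_c(\overline{G})\le 2n-4$, so $m\le\lfloor\sqrt{2n-4}\rfloor$. Whenever $2n-4$ is not a perfect square this already gives the desired bound, because then $\lfloor\sqrt{2n-4}\rfloor=\lceil\sqrt{2n-4}\rceil-1$.

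The one remaining, and main, obstacle is the perfect-square case $2n-4=t^2$, where the crude estimate yields only $m\le t$ while the claim asserts $m\le t-1$. Here I would invoke the equality clause of Theorem \ref{theo:productbound}: if $m=t$, then $m^2=2n-4$ forces $\gamma_c(G)\cdot\gamma_c(\overline{G})=2n-4$ with $\gamma_c(G)=\gamma_c(\overline{G})=t$, so equality holds and one of $G,\overline{G}$ must be a path or a cycle. But a path or cycle on $n$ vertices has connected domination number $n-2$, and $(n-2)(n-4)>0$ gives $n-2>\sqrt{2n-4}=t$ for all $n\ge 7$; the equality forces the other factor to be $2$, whence $m=2<t$, a contradiction. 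Thus $m\le t-1=\lceil\sqrt{2n-4}\rceil-1$ in this case as well. Finally, taking $k=\lceil\sqrt{2n-4}\rceil-1$ in Theorem \ref{theo:compliance} shows every order-$n$ graph is $k$-compliant, which is exactly $f(n)\ge n-k=n-\lceil\sqrt{2n-4}\rceil+1$.
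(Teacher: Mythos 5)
Your proposal is correct and rests on the same engine as the paper's proof, namely Theorem \ref{theo:productbound} together with its equality characterization; the paper splits cases on whether $G$ is a path or cycle and uses the strict inequality otherwise, while you split on whether $2n-4$ is a perfect square, but the two arguments are essentially the same. Your explicit treatment of the case where $G$ or $\overline{G}$ is disconnected is a small bonus, since Theorem \ref{theo:productbound} assumes both are connected and the paper's proof leaves that case implicit.
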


\begin{proof}
If $n\ge 7$ and $G$ is a path or a cycle, then $\gamma_c(G)=n-2$ and $\gamma_c(\overline{G})=2$. 
Then $\min\{\gamma_c(G), \gamma_c(\overline{G})\} =2  \leq \left\lceil \sqrt{2n - 4}\right\rceil - 1.$
For $G$ not a path or a cycle, by Theorem \ref{theo:productbound}, $\gamma_c(G) \cdot \gamma_c(\overline{G}) < 2n - 4.$
Assume $\gamma_c(G), \gamma_c(\overline{G}) >\left\lceil \sqrt{2n - 4}\right\rceil - 1$.
Then $\gamma_c(G), \gamma_c(\overline{G}) \ge \left\lceil \sqrt{2n - 4}\right\rceil $, and $\gamma_c(G)\cdot \gamma_c(\overline{G}) \ge  \big(\left\lceil \sqrt{2n - 4}\right\rceil \big)^2 \ge 2n-4$, a contradiction.
\end{proof}

\begin{remark}
This bound is a strict improvement over Theorem \ref{theo:linearbound} starting from $n \geq 31$.
\end{remark}

\end{document}